\documentclass[11pt,reqno]{amsart}

\usepackage[T1]{fontenc}
\usepackage[utf8]{inputenc}
\usepackage{lmodern}
\usepackage{microtype}
\usepackage[a4paper,margin=1.15in]{geometry}
\usepackage{amsmath,amssymb,amsthm,mathtools}
\usepackage{mathrsfs}
\usepackage{enumitem}
\usepackage{array,booktabs,tabularx}
\usepackage{aliascnt}
\usepackage{xcolor}
\usepackage{hyperref}
\usepackage[nameinlink,capitalize,noabbrev]{cleveref}
\usepackage{amscd}

\definecolor{linkblue}{RGB}{20,63,110}
\definecolor{citegreen}{RGB}{35,92,67}
\hypersetup{
  colorlinks=true,
  linkcolor=linkblue,
  citecolor=citegreen,
  urlcolor=linkblue,
  pdfauthor={Nobuo Iida},
  pdftitle={The Three-Dimensional Symplectization Question for Overtwisted Contact Structures},
  pdfsubject={Contact and symplectic topology},
  pdfkeywords={contact structure, symplectization, overtwisted contact structure, invertible cobordism, proper homotopy, almost-complex structure}
}

\numberwithin{equation}{section}
\allowdisplaybreaks[1]
\newtheorem{theorem}{Theorem}[section]

\newaliascnt{proposition}{theorem}
\newtheorem{proposition}[proposition]{Proposition}
\aliascntresetthe{proposition}

\newaliascnt{lemma}{theorem}
\newtheorem{lemma}[lemma]{Lemma}
\aliascntresetthe{lemma}

\newaliascnt{corollary}{theorem}
\newtheorem{corollary}[corollary]{Corollary}
\aliascntresetthe{corollary}

\newaliascnt{claim}{theorem}

\aliascntresetthe{claim}

\theoremstyle{definition}
\newaliascnt{definition}{theorem}
\newtheorem{definition}[definition]{Definition}
\aliascntresetthe{definition}

\newaliascnt{question}{theorem}
\newtheorem{question}[question]{Question}
\aliascntresetthe{question}

\theoremstyle{remark}
\newaliascnt{remark}{theorem}
\newtheorem{remark}[remark]{Remark}
\aliascntresetthe{remark}

\newcommand{\R}{\mathbb{R}}

\newcommand{\Z}{\mathbb{Z}}
\newcommand{\cJ}{\mathcal{J}}
\newcommand{\cP}{\mathcal{P}}

\newcommand{\id}{\operatorname{id}}
\newcommand{\pr}{\operatorname{pr}}
\newcommand{\Vol}{\operatorname{Vol}}
\newcommand{\Diff}{\operatorname{Diff}}

\newcommand{\Spin}{\operatorname{Spin}}
\newcommand{\Spinc}{\operatorname{Spin}^{c}}
\newcommand{\divis}{\operatorname{div}}
\newcommand{\congsymp}{\cong_{\mathrm{symp}}}
\newcommand{\congcont}{\cong_{\mathrm{cont}}}

\newcommand{\PD}{\operatorname{PD}}
\newcommand{\Wh}{\operatorname{Wh}}

\newcommand{\SO}{\mathrm{SO}}
\newcommand{\U}{\mathrm{U}}

\title[Overtwisted symplectization rigidity]{The Three-Dimensional Symplectization Question for Overtwisted Contact Structures}
\author{Nobuo Iida}
\address{Kavli Institute for the Physics and Mathematics of the Universe (WPI), The University of Tokyo, 5-1-5 Kashiwanoha, Kashiwa, Chiba 277-8583, Japan}
\email{iidanobuo1224@g.ecc.u-tokyo.ac.jp}
\date{August 24, 2026}

\begin{document}

\begin{abstract}
We consider closed connected oriented three-manifolds equipped with positive cooriented contact structures. We prove that a symplectomorphism between their symplectizations forces the existence of an orientation-preserving diffeomorphism between the underlying three-manifolds under which the two contact structures are homotopic as oriented plane fields. Combining this formal rigidity with the classification of overtwisted contact structures, we show that two closed overtwisted contact three-manifolds have symplectomorphic symplectizations if and only if they are contactomorphic. Thus, we resolve the three-dimensional symplectization question when both contact structures are overtwisted.

\end{abstract}

\maketitle
\setcounter{tocdepth}{2}
\tableofcontents

\section{Introduction}

Let $Y$ be a closed connected oriented three-manifold and let $\xi\subset TY$ be a positive cooriented contact structure.  Choose a positive contact form $\alpha$ with
\[
  \xi=\ker\alpha,
  \qquad
  \alpha\wedge d\alpha>0.
\]
Its symplectization is the exact symplectic manifold
\begin{equation}\label{eq:symplectization-definition}
  S(Y,\alpha)
  =\bigl(\R_t\times Y,\lambda_\alpha=e^t\alpha,
  \omega_\alpha=d\lambda_\alpha\bigr).
\end{equation}
The exact symplectic isomorphism type\footnote{For exact symplectic manifolds equipped with specified primitives $(X_i,\lambda_i)$, $i=0,1$, an \emph{exact symplectomorphism} is a diffeomorphism $\Psi\colon X_0\to X_1$ such that $\Psi^*\lambda_1-\lambda_0=dH$ for some smooth function $H\colon X_0\to\R$.  The \emph{exact symplectic isomorphism type} is the equivalence class of the pair $(X,\lambda)$ under exact symplectomorphisms.} depends only on the cooriented contact structure.  Indeed, if $\alpha'=e^g\alpha$, then
\begin{equation}\label{eq:change-contact-form}
  (t,y)\longmapsto (t-g(y),y)
\end{equation}
pulls $e^t\alpha'$ back to $e^t\alpha$; see \cref{lem:strict-lift}.  We therefore write $S(Y,\xi)$ when the defining contact form is immaterial.

Courte constructed pairs of contact manifolds in every odd dimension at least five that are not diffeomorphic although their symplectizations are exact symplectomorphic \cite{Courte2014}.  Concretely, for every $n\ge1$, the nondiffeomorphic manifolds
\[
  L(7,1)\times S^{2n}
  \quad\text{and}\quad
  L(7,2)\times S^{2n}
\]
carry contact structures whose symplectizations are exact symplectomorphic.  This leaves the following problem.

\begin{question}[Three-dimensional symplectization question]\label{q:symplectization}
Do there exist closed contact three-manifolds $(Y_0,\xi_0)$ and $(Y_1,\xi_1)$ that are not contactomorphic although
\[
  S(Y_0,\xi_0)\congsymp S(Y_1,\xi_1)?
\]
(The symplectomorphism is not assumed to be exact.)
\end{question}
This is Problem 40 in McDuff--Salamon's textbook \cite{McDuffSalamon2017}.
The purpose of this paper is to settle \cref{q:symplectization} when both contact structures are overtwisted.

\begin{theorem}[Overtwisted symplectization rigidity]\label{thm:main-overtwisted}
Let $Y_0$ and $Y_1$ be closed connected oriented three-manifolds equipped with positive cooriented overtwisted contact structures $\xi_0$ and $\xi_1$, respectively.  Then
\begin{equation}\label{eq:main-iff}
  S(Y_0,\xi_0)\congsymp S(Y_1,\xi_1)
  \quad\Longleftrightarrow\quad
  (Y_0,\xi_0)\congcont (Y_1,\xi_1).
\end{equation}
Moreover, every coorientation-preserving contactomorphism lifts to a strict exact symplectomorphism\footnote{Here \emph{strict} means equality of the chosen Liouville forms: $\widetilde f^{\,*}\lambda_1=\lambda_0$, rather than merely $\widetilde f^{\,*}\lambda_1-\lambda_0$ being exact.} of the symplectizations.
\end{theorem}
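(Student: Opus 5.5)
The reverse implication is the easy direction, and I will dispose of it first. Let $f\colon (Y_0,\xi_0)\to(Y_1,\xi_1)$ be a coorientation-preserving contactomorphism, and fix a contact form $\alpha_1$ for $\xi_1$. Then $f^*\alpha_1=e^{g}\alpha_0$ for some smooth $g$. The map $\widetilde f(t,y)=(t-g(y),f(y))$ satisfies
\[
\widetilde f^{\,*}\bigl(e^t\alpha_1\bigr)=e^{t-g}e^{g}\alpha_0=e^t\alpha_0 ,
\]
so it is a strict exact symplectomorphism. This is exactly the computation of \cref{lem:strict-lift}, combined with \eqref{eq:change-contact-form}. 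It also gives the ``moreover'' clause.

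For the forward direction I will split the argument into a formal rigidity statement and Eliashberg's classification. Eliashberg's theorem says that on a closed three-manifold, two overtwisted contact structures that are homotopic as oriented plane fields are isotopic. So it suffices to produce an orientation-preserving diffeomorphism $\phi\colon Y_0\to Y_1$ with $\phi^*\xi_1$ homotopic to $\xi_0$ as oriented plane fields. Eliashberg then gives an isotopy from $\phi^*\xi_1$ to $\xi_0$, and Gray's theorem turns this into a contactomorphism. Coorientations are automatic once orientations and oriented planes match.

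The formal rigidity is the heart of the proof. Let $\Psi\colon \R\times Y_0\to\R\times Y_1$ be a symplectomorphism. I proceed in three steps.

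\textbf{Step 1: a diffeomorphism of the bases.} Because $\Psi$ is a proper homotopy equivalence, it identifies the two ends of $\R\times Y_0$ with ends of $\R\times Y_1$. By composing with a reflection I can arrange that it matches positive ends with positive ends; I need to check that orientation considerations, via $\omega^2$, pin this down. Choose $T\gg0$ so that $\Psi(\{-T\}\times Y_0)$ lies below $\{0\}\times Y_1$, and then choose $T'$ with $\Psi(\{T\}\times Y_0)$ above $\{0\}\times Y_1$. This exhibits $\{0\}\times Y_1$ inside a region between two copies of $Y_0$, giving an h-cobordism, in fact an invertible cobordism. In dimension four with three-dimensional ends I cannot invoke the s-cobordism theorem. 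Instead I use that the composite inclusions are homotopy equivalences, so $Y_0$ and $Y_1$ are homotopy equivalent. Geometrization (Perelman) together with the fact that the proper homotopy type of the cobordism is controlled should then upgrade this to an orientation-preserving diffeomorphism $\phi$ homotopic to the induced map. The one caveat is lens spaces, where homotopy equivalence does not imply homeomorphism. There I plan to use the invertible cobordism, which gives a simple homotopy equivalence, hence torsion equality via the trivial Whitehead torsion of the h-cobordism; torsion classifies lens spaces.

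\textbf{Step 2: the Euler class and $\Spinc$ data.} The plane field $\xi_i$ together with $\partial_t$ and the Reeb vector field determines a compatible almost-complex structure on $\R\times Y_i$, hence a $\Spinc$ structure whose restriction to $Y_i$ is the one determined by $\xi_i$. Since $\Psi$ is symplectic, $\Psi^*J_1$ is homotopic to $J_0$ through $\omega_0$-compatible structures, because that space is contractible. Pulling back along the cobordism, the $\Spinc$ structures of $\xi_0$ and $\phi^*\xi_1$ agree, which settles the 2-dimensional obstruction.

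\textbf{Step 3: the Hopf invariant.} The remaining obstruction is the 3-dimensional (Hopf/$d_3$) invariant, which I would compute as $\tfrac14(c_1^2-2\chi-3\sigma)$ for a filling. The plan is to compare almost-complex structures on $[0,1]\times Y$ relative to the ends, using $\chi=\sigma=0$ for the invertible cobordism and additivity. The main obstacle, I expect, is making Steps 1 and 3 rigorous: $\phi$ is only homotopic to the map induced by $\Psi$, so the relative homotopy of almost-complex structures over the cobordism must be transported carefully, including the torsion ambiguity in $H^2$. This may require the mapping class / homotopy-equals-isotopy results for three-manifolds.
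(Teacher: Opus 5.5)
Your outer reduction is the paper's. The converse and the ``moreover'' clause are exactly \cref{lem:strict-lift}, and the forward direction is formal rigidity followed by Eliashberg and Gray. The gaps are all inside your formal-rigidity argument.

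\emph{End order.} Composing $\Psi$ with a reflection destroys the symplectic property you use in Step~2. The \emph{sign} of $\omega^2$ cannot settle the question either, since $(t,y)\mapsto(-t,r(y))$ with $r$ orientation-reversing preserves orientation. An end-reversing $\Psi$ would give a cross-sectional map of degree $-1$, so that case must be excluded, not normalized away. What excludes it is the \emph{volume} of $\omega^2$: $(-\infty,0]\times Y$ has finite volume while $[T,\infty)\times Y$ has infinite volume (\cref{lem:end-volumes,prop:end-preservation}).

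\emph{Realizing the cross-sectional map.} You need the particular map $h(y)=\pr_{Y_1}\Psi(0,y)$ to be homotopic to an orientation-preserving diffeomorphism, not merely $Y_0\cong Y_1$. Your lens-space caveat only addresses the latter. Homotopy equivalences need not be homotopic to homeomorphisms even on a single lens space: a degree-one self-map of $L(8,1)$ inducing multiplication by $3$ on $\pi_1$ is a homotopy equivalence, but every self-homeomorphism of $L(8,1)$ induces $\pm1$.

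Your fix asserts that the $h$-cobordism has trivial Whitehead torsion, but invertibility gives no such thing. An inverse simply carries the opposite torsion, and in high dimensions every $h$-cobordism is invertible, including the nonzero-torsion one between $L(7,1)\times S^{2n}$ and $L(7,2)\times S^{2n}$ in \cref{sec:scope}. The missing inputs are:
\begin{itemize}
\item the Kwasik--Schultz theorem (every four-dimensional $h$-cobordism between closed oriented three-manifolds is an $s$-cobordism, once geometrization is available), which makes $h$ simple;
\item Turaev's theorem that a simple homotopy equivalence between closed three-manifolds is homotopic to a homeomorphism, again via geometrization.
\end{itemize}

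\emph{Transferring the almost-complex class.} Your Steps~2--3 leave open exactly the step the paper treats as the crux. Compatibility gives $[J_0]=\Psi^*[J_1]$, and you need $\Psi^*[J_1]=(\id_\R\times\phi)^*[J_1]$ in $\cJ(\R\times Y_0)$. But $\Psi$ and $\id_\R\times\phi$ are only properly homotopic as maps of four-manifolds, and a homotopy of maps does not yield a homotopy of differentials (\cref{rem:no-differentiate}).

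This is the source of your ``torsion ambiguity.'' A priori the two differentials differ by a map $Y_0\to\SO(4)$, which acts on $\Spinc$ structures through a $2$-torsion class. So even Step~2 is unproved when $H^2(Y_0;\Z)$ has $2$-torsion. Homotopy-implies-isotopy results for three-manifolds do not touch this, because the obstruction lives in the non-product four-dimensional map $\Psi$.

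Your Step~3 computation with $\chi(W)=\sigma(W)=0$ does give equality of $\tfrac14(c_1^2-2\chi-3\sigma)$ for $\xi_0$ and $\xi_1$ when $c_1(\xi_i)$ is torsion. Together with matched $\Spinc$ structures, that would suffice in the torsion case. When $c_1$ is non-torsion, however, the three-dimensional invariant is only a torsor over $\Z/d$ with no absolute value, and a relative comparison requires transporting framed data along $\Psi$.

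The paper handles both levels at once with Gompf's homeomorphism functor. Properly homotopic orientation-preserving, end-order-preserving diffeomorphisms of $\R\times Y$ induce the same pullback on $\cJ$ (\cref{thm:Gompf-homeomorphism-functor,prop:J-proper-invariance}). The product correspondence $\rho_Y$ and its naturality for product maps then give $[\xi_0]=[\phi^*\xi_1]$.
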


The contact-topological input is Eliashberg's classification of overtwisted contact structures by their formal homotopy classes \cite{Eliashberg1989,Huang2013}.  The main work is the following statement, which does not assume overtwistedness.

\begin{theorem}[Formal symplectization rigidity]\label{thm:formal-rigidity}
Let $Y_i$, $i=0,1$, be closed connected oriented three-manifolds equipped with positive cooriented contact structures $\xi_i$.  Suppose there is a symplectomorphism
\[
  \Phi\colon S(Y_0,\xi_0)\longrightarrow S(Y_1,\xi_1).
\]
Then there is an orientation-preserving diffeomorphism
\[
  f\colon Y_0\longrightarrow Y_1
\]
such that
\begin{equation}\label{eq:formal-conclusion}
  [\xi_0]=[f^*\xi_1]\in\cP(Y_0),
\end{equation}
where $\cP(Y_0)$ denotes the set of homotopy classes of oriented two-plane fields on $Y_0$.
\end{theorem}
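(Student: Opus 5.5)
The plan is to extract from $\Phi$ a compact four-dimensional invertible cobordism between $Y_0$ and $Y_1$ that carries an almost complex structure. I will then use three-manifold topology to produce $f$, and characteristic-class invariants of plane fields to compare $\xi_0$ with $f^*\xi_1$.

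\emph{Step 1: ends and orientation.} First I check that $\Phi$ sends the convex end $t\to+\infty$ of $S(Y_0,\xi_0)$ to the convex end of $S(Y_1,\xi_1)$. The argument is that $\omega_\alpha^2$ has finite total volume on $(-\infty,0]\times Y$ and infinite volume on $[0,\infty)\times Y$. A symplectomorphism preserves volume and, being proper, permutes the ends, so it cannot exchange these two. It also preserves the symplectic orientations.

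Choose $T\gg0$ with $\Phi(\{0\}\times Y_0)\subset(-\infty,T)\times Y_1$, and let $W$ be the compact region between $\{0\}\times Y_0$ and $\Phi^{-1}(\{T\}\times Y_1)$. Then $W$ is an oriented cobordism from $Y_0$ to $Y_1$. Both inclusions $Y_i\hookrightarrow W$ are homotopy equivalences, since everything sits inside $\R\times Y_0\simeq Y_0$ with compatible retractions. So $W$ is an $h$-cobordism.

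\emph{Step 2: the diffeomorphism $f$.} From the $h$-cobordism, $Y_0$ and $Y_1$ are orientation-preservingly homotopy equivalent via $r=\pr_{Y_1}\circ\Phi\circ\iota_0$. Moreover the equivalence is simple, since its torsion is that of an honest diffeomorphism of open manifolds. By geometrization (Turaev's theorem that simple homotopy equivalent, $h$-cobordant closed three-manifolds are homeomorphic, combined with Moise), I obtain an orientation-preserving diffeomorphism $f\colon Y_0\to Y_1$. As far as possible I would arrange that $f$ is homotopic to $r$; when $Y_0$ is a lens space or a non-prime manifold this needs separate care.

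\emph{Step 3: comparing plane fields.} Fix a compatible $J_i$ on $S(Y_i,\xi_i)$ that is cylindrical, with $J_i\partial_t=R_{\alpha_i}$ and $J_i\xi_i=\xi_i$. Then $\Phi^*J_1$ and $J_0$ are both $\omega_0$-compatible, hence homotopic. Consequently $W$ carries an almost complex structure whose induced plane fields on the two boundary components are homotopic to $\xi_0$ and $\xi_1$.

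Homotopy classes in $\cP(Y)$ are detected by two invariants:
\begin{enumerate}[label=(\roman*)]
\item the induced $\Spinc$ structure $\mathfrak s_\xi$;
\item the three-dimensional invariant, which is $d_3=\tfrac14(c_1^2-3\sigma-2\chi)$ when $c_1(\xi)$ is torsion, and Gompf's $\Gamma$ together with the $\Z/\divis$-valued refinement otherwise.
\end{enumerate}
Both invariants change across an almost complex cobordism by explicit characteristic numbers of $(W,J)$. For the $h$-cobordism $W$ we have $\chi(W)=0$ and $\sigma(W)=0$, and the intersection form on $H_2(W,\partial W)$ vanishes. The restrictions $H^2(W)\to H^2(Y_i)$ are isomorphisms, and $\mathfrak s_J|_{Y_1}$ corresponds to $\mathfrak s_J|_{Y_0}$ under $r$. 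From this I would deduce $\mathfrak s_{\xi_0}=r^*\mathfrak s_{\xi_1}$ and equality of the $d_3$/$\Gamma$ invariants, hence $[\xi_0]=[r^*\xi_1]$ when this is interpreted via the bundle map over $r$ coming from $TW$.

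\emph{Main obstacle.} The hardest step is passing from the homotopy equivalence $r$ with its bundle data to the actual diffeomorphism $f$ with its derivative $df$. Two issues arise. First, $f$ need not be homotopic to $r$. Second, even when it is, $df$ and the $TW$-induced bundle isomorphism $TY_0\oplus\R\to r^*(TY_1\oplus\R)$ may differ by an element of $[Y_0,\SO(4)]$, and such an element can shift the $\Spinc$ structure and the three-dimensional invariant.

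To handle this, I would first use the $\pi_1$-level classification of self-homotopy equivalences of three-manifolds to show that $r$ is homotopic to a diffeomorphism. Then I would show that the bundle discrepancy is stably trivial. The reason is that both framings come from the orientation and spin data of the single four-manifold $W$, so the discrepancy lies in $\ker\bigl([Y_0,\SO(4)]\to[Y_0,\SO]\bigr)$, and that kernel acts trivially on the invariants above. If this approach fails, the fallback is to compose $f$ with a diffeomorphism of $Y_0$ realizing the required change of $\Spinc$ structure.
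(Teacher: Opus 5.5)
Your proposal has two genuine gaps, and they sit exactly where the paper needs its deepest inputs.

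\textbf{Simplicity.} In Step~2 you argue that $r=\pr_{Y_1}\circ\Phi\circ\iota_0$ is simple ``since its torsion is that of an honest diffeomorphism of open manifolds.'' That principle is false. A diffeomorphism $\R\times Y_0\cong\R\times Y_1$ only yields an invertible cobordism. The natural homotopy equivalence of an $h$-cobordism with torsion $\tau$ has torsion $\tau\pm\bar\tau$, and this need not vanish. Indeed, in the higher-dimensional examples of \cref{sec:scope}, $\R\times M_0\cong\R\times M_1$ although $M_0$ and $M_1$ are not even simple homotopy equivalent. In dimension three simplicity is a deep theorem. The paper gets it from Kwasik--Schultz: after geometrization, every $h$-cobordism between closed oriented three-manifolds is an $s$-cobordism. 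It applies this to the invertible cobordism $W_F$ cut out by $\Phi(\{0\}\times Y_0)$, whose natural marking is the cross-sectional map $h(y)=b(0,y)$. Once $h$ is simple, Turaev's theorem is map-level, so $h$ itself is homotopic to an orientation-preserving diffeomorphism $f$. Your separate worry about arranging $f\simeq r$ for lens spaces and connected sums then disappears. Separately, your Step~1 claim that the inclusions into $W$ are homotopy equivalences does not follow just from $W\subset\R\times Y_0$; it needs the three-region argument behind Hausmann--Jahren.

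\textbf{Identifying $f^*$.} Step~3 does not close. Your comparison across $W$ gives at best $[\xi_0]=[\beta^*\xi_1]$ for the bundle map $\beta$ over $r$ induced by $TW$. You leave open how to replace $\beta$ by $df\oplus\id_\R$, and the mechanism you propose does not work. On $\R^4=\mathbb{H}$, the map $x\mapsto(v\mapsto xvx)$ represents a class in $\ker\bigl(\pi_3(\SO(4))\to\pi_3(\SO)\bigr)$. Yet both factors of $\SO(4)\cong(S^3\times S^3)/\{\pm1\}$ have degree $\pm1$ for this class. So it acts on $\SO(4)/\U(2)\cong S^2$ through the Hopf map and shifts the $\pi_3(S^2)$-component of $\cP(Y_0)$; stably trivial discrepancies are not invisible. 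Your fallback, precomposing $f$ with a diffeomorphism that changes the $\Spinc$ structure, is not available in general and gives no control of the secondary invariant. Moreover, when $c_1(\xi_i)$ is non-torsion there is no absolute invariant like $d_3$ to compare across $W$ without such an identification.

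The missing idea is to avoid tangent-bundle identifications altogether. The explicit homotopy $H_s(t,y)=\bigl((1-s)a(t,y)+st,\,b((1-s)t,y)\bigr)$ is proper, by uniform divergence of $a$. So $\Phi$ is properly homotopic to $\id_\R\times h$, and hence to $\id_\R\times f$. Gompf's homeomorphism functor shows that properly homotopic orientation- and end-order-preserving diffeomorphisms of $\R\times Y$ induce the same pullback on $\cJ(\R\times Y)$. Combine this with the natural bijection $\rho_Y\colon\cJ(\R\times Y)\cong\cP(Y)$ and contractibility of compatible structures. This gives directly $[\xi_0]=\rho_{Y_0}(\Phi^*[J_1])=\rho_{Y_0}((\id_\R\times f)^*[J_1])=[f^*\xi_1]$.
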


\subsection*{Outline of the proof}
The argument is assembled from the following steps.
\begin{enumerate}[label=\textup{(\roman*)},leftmargin=2.4em]
  \item The negative end of a symplectization has a finite-volume
  neighborhood, whereas every neighborhood of the positive end has infinite
  symplectic volume.  Hence every symplectomorphism preserves the two ends
  separately.

  \item If an end-order-preserving diffeomorphism is written as
  \[
    \Phi(t,y)=(a(t,y),b(t,y)),
  \]
  then
  \begin{equation}\label{eq:intro-proper-homotopy}
    H_s(t,y)
    =\bigl((1-s)a(t,y)+st,\ b((1-s)t,y)\bigr)
  \end{equation}
  is a proper homotopy from $\Phi$ to $\id_\R\times h$, where
  $h(y)=b(0,y)$.  At this stage $h$ is only a degree-$+1$ homotopy
  equivalence.

  \item The hypersurface $\Phi(\{0\}\times Y_0)$ cuts out an invertible
  cobordism whose natural homotopy equivalence is $h$.  Hausmann--Jahren show
  that an invertible cobordism is an $h$-cobordism
  \cite[Proposition~3.11]{HausmannJahren2018}.  Kwasik--Schultz prove
  Whitehead-torsion vanishing for $h$-cobordisms between geometric closed
  three-manifolds \cite{KwasikSchultz1992}; after Perelman's geometrization
  theorem, this applies to all closed oriented three-manifolds.  Thus $h$ is
  simple.

  \item Turaev proves that a simple homotopy equivalence between geometric
  three-manifolds is homotopic to a homeomorphism
  \cite[Theorem~I]{Turaev1988Homeomorphisms}; see also
  \cite[Theorem~1.6]{Turaev1988Classification}.  His term
  ``geometric'' includes hyperbolic, Seifert, Haken manifolds and their
  connected sums.  Perelman's proof of geometrization therefore makes this
  theorem applicable to all closed oriented three-manifolds.  It follows that
  $h$ is homotopic to an orientation-preserving diffeomorphism $f$.  Hence
  there is an end-order-preserving proper homotopy from $\Phi$ to
  $\id_\R\times f$.

  \item A compatible almost-complex structure on a symplectization determines
  the contact plane field on a standard slice.  Gompf's homeomorphism functor
  for homotopy classes of almost-complex structures on manifolds homeomorphic
  to $\R\times Y$ shows that properly homotopic diffeomorphisms induce the
  same pullback on these classes \cite[Theorems~4.1 and~4.7]{Gompf2022}.
  This yields \eqref{eq:formal-conclusion}.

  \item If both contact structures are overtwisted, Eliashberg's theorem
  upgrades the plane-field homotopy to contact isotopy, and Gray stability
  produces a contactomorphism.
\end{enumerate}

\subsection*{Organization of the paper}
\Cref{sec:conventions} fixes the orientation and proper-homotopy
conventions.  \Cref{sec:ends} proves end-order preservation by volume, and
\cref{sec:proper-product} constructs the explicit proper homotopy.
\Cref{sec:cobordism} identifies the invertible cobordism and its natural
marking.  \Cref{sec:realization} records prime decomposition and
geometrization, recalls the minimum Whitehead-torsion formalism, and applies
the cited theorems of Kwasik--Schultz and Turaev.
\Cref{sec:almost-complex} introduces the relevant almost-complex homotopy
classes and uses Gompf's cited functoriality theorem to compare them.  The
formal and overtwisted rigidity theorems are proved in
\cref{sec:formal-proof,sec:overtwisted-proof};
\cref{sec:scope} explains why the conclusion is special to dimension three.
Detailed reconstructions of the external three- and four-dimensional inputs
are deliberately omitted from the present paper.

\section{Conventions and preliminary facts}\label{sec:conventions}

All manifolds are smooth, Hausdorff, and second countable.  Unless otherwise stated, three-manifolds are closed, connected, and oriented.  Contact structures are positive and cooriented.  A contactomorphism is required to preserve the coorientation.

\subsection{Contact and symplectic orientations}

Let $(Y,\alpha)$ be a positive contact three-manifold.  On $\R_t\times Y$, we use the product orientation with $\partial_t$ first.  Since
\begin{equation}\label{eq:symplectic-form}
  \omega_\alpha=d(e^t\alpha)
  =e^t(dt\wedge\alpha+d\alpha),
\end{equation}
we have
\begin{equation}\label{eq:symplectic-volume-form}
  \frac{1}{2}\omega_\alpha^2
  =e^{2t}\,dt\wedge\alpha\wedge d\alpha.
\end{equation}
Thus the symplectic orientation agrees with the product orientation.  In particular, every symplectomorphism of symplectizations is orientation preserving.

The orientation of the contact plane field $\xi=\ker\alpha$ is the one induced by $d\alpha|_\xi$.  Together with the chosen coorientation, it recovers the orientation of $Y$.  Conversely, on an oriented three-manifold, an oriented plane field has a preferred positive normal line and hence a preferred coorientation.  We will therefore move freely between homotopies of positive cooriented plane fields and homotopies of oriented plane fields.

\subsection{Changing the defining form and lifting contactomorphisms}

The map \eqref{eq:change-contact-form} proves the independence of the symplectization from the defining contact form.  The same formula gives the converse direction in \cref{thm:main-overtwisted}.

\begin{lemma}[Strict lift of a contactomorphism]\label{lem:strict-lift}
Let
\[
  f\colon (Y_0,\xi_0)\longrightarrow (Y_1,\xi_1)
\]
be a coorientation-preserving contactomorphism.  For positive contact forms $\alpha_i$ defining $\xi_i$, write
\[
  f^*\alpha_1=e^g\alpha_0
\]
for a smooth function $g\colon Y_0\to\R$.  Then
\begin{equation}\label{eq:strict-lift}
  \widetilde f(t,y)=\bigl(t-g(y),f(y)\bigr)
\end{equation}
is a strict exact symplectomorphism satisfying
\[
  \widetilde f^{\,*}(e^t\alpha_1)=e^t\alpha_0.
\]
\end{lemma}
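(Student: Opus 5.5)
The plan is a direct pullback computation, followed by a check that $\widetilde f$ is a diffeomorphism.

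\emph{Existence of $g$.} Since $f$ is a contactomorphism, $f_*\xi_0=\xi_1$, so $f^*\alpha_1$ is a nowhere-vanishing one-form on $Y_0$ with kernel $\xi_0=\ker\alpha_0$. Hence $f^*\alpha_1=u\,\alpha_0$ for a nowhere-vanishing smooth function $u$. Because $f$ preserves coorientations and both forms are positive on the respective positive normals, $u>0$. We may therefore write $u=e^g$ with $g=\log u$ smooth.

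\emph{Pullback computation.} Write $\widetilde f=(t-g(y),f(y))$ and let $\tau=t-g$ denote the first component. The pullback of the $\R$-coordinate function is $\tau$, and the pullback of $\pr_{Y_1}^*\alpha_1$ is $\pr_{Y_0}^*f^*\alpha_1=e^{g}\alpha_0$. Thus
\[
  \widetilde f^{\,*}(e^t\alpha_1)=e^{t-g(y)}e^{g(y)}\alpha_0=e^t\alpha_0 .
\]
Applying $d$ gives $\widetilde f^{\,*}\omega_{\alpha_1}=\omega_{\alpha_0}$, so $\widetilde f$ is symplectic with strict equality of Liouville forms; in particular $H=0$ in the definition of exact symplectomorphism.

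\emph{Diffeomorphism.} $\widetilde f$ is smooth, with explicit smooth inverse $(s,z)\mapsto\bigl(s+g(f^{-1}(z)),f^{-1}(z)\bigr)$, so it is a diffeomorphism. It also preserves the ends ($t\to\pm\infty$) and orientation, the latter consistent with \eqref{eq:symplectic-volume-form}.

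No step is a genuine obstacle. The only point requiring care is the sign $u>0$, which is exactly where coorientation preservation is used: without it, $u<0$ and $\log$ would not be defined. The converse direction of \cref{thm:main-overtwisted} then follows immediately.
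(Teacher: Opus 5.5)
Your proposal is correct and follows the same route as the paper: the direct pullback computation $\widetilde f^{\,*}(e^t\alpha_1)=e^{t-g}e^{g}\alpha_0=e^t\alpha_0$, followed by naturality of $d$ to get $\widetilde f^{\,*}\omega_{\alpha_1}=\omega_{\alpha_0}$. You also justify two points the paper leaves implicit: the existence and positivity of the conformal factor $e^g$, using coorientation preservation, and the fact that $\widetilde f$ is a diffeomorphism, via an explicit inverse.
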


\begin{proof}
A direct calculation gives
\[
  \widetilde f^{\,*}(e^t\alpha_1)
  =e^{t-g(y)}f^*\alpha_1
  =e^{t-g(y)}e^{g(y)}\alpha_0
  =e^t\alpha_0.
\]
Applying the exterior derivative and using its naturality under pullback gives
\[
  \widetilde f^{\,*}d(e^t\alpha_1)
  =d\bigl(\widetilde f^{\,*}(e^t\alpha_1)\bigr)
  =d(e^t\alpha_0).
\]
Thus $\widetilde f$ preserves the symplectic forms.  Since it preserves the Liouville forms themselves, it is strict and, in particular, exact.
\end{proof}

\subsection{Standard ends and proper homotopies}

For a closed connected manifold $Y$ and $T>0$, set
\begin{equation}\label{eq:standard-ends}
  E_-^Y(T):=(-\infty,-T]\times Y,
  \qquad
  E_+^Y(T):=[T,\infty)\times Y.
\end{equation}

\begin{definition}\label{def:end-order}
A proper map
\[
  F\colon \R\times Y_0\longrightarrow \R\times Y_1
\]
is called \emph{end-order-preserving} if, for every $R>0$, there
exists $T>0$ such that
\[
  F\bigl(E_-^{Y_0}(T)\bigr)\subset E_-^{Y_1}(R)
  \qquad\text{and}\qquad
  F\bigl(E_+^{Y_0}(T)\bigr)\subset E_+^{Y_1}(R).
\]
It is called \emph{end-order-reversing} if, for every $R>0$, there
exists $T>0$ such that
\[
  F\bigl(E_-^{Y_0}(T)\bigr)\subset E_+^{Y_1}(R)
  \qquad\text{and}\qquad
  F\bigl(E_+^{Y_0}(T)\bigr)\subset E_-^{Y_1}(R).
\]
\end{definition}

\begin{lemma}[End-order dichotomy for product homeomorphisms]\label{lem:homeomorphism-end-order}
Let $Y_0$ and $Y_1$ be closed connected manifolds.  Every homeomorphism
\[
  F\colon \R\times Y_0\longrightarrow \R\times Y_1
\]
is either end-order-preserving or end-order-reversing.
\end{lemma}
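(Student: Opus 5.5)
The plan is to show that a homeomorphism $F$ induces a bijection between the two ends of its domain and the two ends of its target, and then to read off the dichotomy from that bijection. Since $Y_i$ is closed and connected, $\R\times Y_i$ has exactly two ends. Indeed, for compact $K$ we may enclose $K$ in $[-R,R]\times Y_i$. The complement of $[-R,R]\times Y_i$ consists of the two connected pieces $(-\infty,-R)\times Y_i$ and $(R,\infty)\times Y_i$, each unbounded. So the end structure is completely described by the sets $E_\pm^{Y_i}(R)$.

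\textbf{Step 1: slab images and their complements.} Fix $R>0$. The set $K_1=[-R,R]\times Y_1$ is compact, and $F$ is a homeomorphism, so $F^{-1}(K_1)$ is compact. Hence $F^{-1}(K_1)\subset(-T_0,T_0)\times Y_0$ for some $T_0$. Enlarge $T_0$ so that also $F([-1,1]\times Y_0)\subset(-T_0',T_0')\times Y_1$ for some $T_0'$; this will be needed for the counting in Step 2. The sets $E_-^{Y_0}(T_0)$ and $E_+^{Y_0}(T_0)$ are connected and disjoint from $F^{-1}(K_1)$. Their images are therefore connected subsets of $(\R\times Y_1)\setminus K_1$. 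It follows that each image lies entirely in $E_-^{Y_1}(R)$ or entirely in $E_+^{Y_1}(R)$, where we shrink to the open versions as needed; the closed version follows by a slightly larger $R$.

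\textbf{Step 2: the two images land in different ends.} This is the main point. Suppose both $F(E_\pm^{Y_0}(T_0))$ lie in $E_+^{Y_1}(R)$. Then $F$ maps $\R\times Y_0$ into $E_+^{Y_1}(R)\cup F([-T_0,T_0]\times Y_0)$. The second set is compact, hence contained in $[-S,S]\times Y_1$ for some $S$. Consequently the image of $F$ misses $(-\infty,-\max(S,R))\times Y_1$. This contradicts surjectivity of $F$. So one of the two pieces goes to $E_-^{Y_1}(R)$ and the other to $E_+^{Y_1}(R)$.

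\textbf{Step 3: consistency as $R$ varies.} Call $F$ preserving at level $R$ if $E_+^{Y_0}(T)$ lands in $E_+^{Y_1}(R)$, and reversing at level $R$ otherwise. We must check that this type does not depend on $R$. For $R<R'$ with witnesses $T$ and $T'$, choose $T''\ge\max(T,T')$. Then $E_+^{Y_0}(T'')$ lies in both chosen ends. If the type changed, this set would have to lie in both $E_+^{Y_1}(R)$ and $E_-^{Y_1}(R')$. These are disjoint, and $E_+^{Y_0}(T'')$ is nonempty, so this is impossible. Hence the type is constant in $R$. By Definition~\ref{def:end-order}, $F$ is therefore end-order-preserving or end-order-reversing. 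Properness is automatic for a homeomorphism.

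I expect the only real subtlety to be Step 2, which uses surjectivity. Everything else is connectedness of $[T,\infty)\times Y_0$, which comes from connectedness of $Y_0$, together with compactness of preimages.
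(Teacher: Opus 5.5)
Your proposal is correct and follows essentially the same route as the paper: compactness of $F^{-1}([-R,R]\times Y_1)$ together with connectedness of the source ends, then surjectivity against the compact middle slab $F([-T,T]\times Y_0)$ to force the two ends apart, then nesting of the ends to show the alternative does not depend on $R$. The only blemish is the auxiliary enlargement involving $F([-1,1]\times Y_0)$ in Step~1, which Step~2 never uses and which can simply be deleted.
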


\begin{proof}
First note that every homeomorphism is proper.  Indeed, if
$K\subset \R\times Y_1$ is compact, then
\[
  F^{-1}(K)
\]
is the image of $K$ under the continuous inverse map $F^{-1}$, and
hence is compact.

Fix $R>0$.  Since
\[
  F^{-1}\bigl([-R,R]\times Y_1\bigr)
\]
is compact, there exists $T>0$ such that
\[
  F^{-1}\bigl([-R,R]\times Y_1\bigr)
  \subset (-T,T)\times Y_0.
\]
Equivalently,
\[
  F\bigl(E_-^{Y_0}(T)\cup E_+^{Y_0}(T)\bigr)
  \subset
  \bigl(\R\times Y_1\bigr)\setminus([-R,R]\times Y_1).
\]
The latter complement has precisely two connected components,
\[
  (-\infty,-R)\times Y_1
  \qquad\text{and}\qquad
  (R,\infty)\times Y_1,
\]
because $Y_1$ is connected.  Since each of
$E_-^{Y_0}(T)$ and $E_+^{Y_0}(T)$ is connected, its image under $F$
must lie entirely in one of these two components.

The two source ends cannot both map into the same target component.
Indeed, suppose, for example, that
\[
  F\bigl(E_-^{Y_0}(T)\bigr)
  \cup
  F\bigl(E_+^{Y_0}(T)\bigr)
  \subset
  (R,\infty)\times Y_1.
\]
Since $F$ is surjective we have
\[
  E_-^{Y_1}(R)
  \subset
  F\bigl([-T,T]\times Y_0\bigr).
\]
The set on the right is compact.  Therefore its projection to the
$\R$-factor is compact, and in particular bounded.  On the other hand,
\[
  \pr_{\R}\bigl(E_-^{Y_1}(R)\bigr)=(-\infty,-R],
\]
which is unbounded.  This is a contradiction.
The case in which both source ends map into the negative target
component is identical.

It remains to see that the resulting choice does not depend on $R$.
Suppose $0<R_1<R_2$.  Choose corresponding cutoffs $T_1,T_2$ as above,
and enlarge them if necessary so that $T_2\ge T_1$.  Then
\[
  E_-^{Y_0}(T_2)\subset E_-^{Y_0}(T_1),
  \qquad
  E_+^{Y_0}(T_2)\subset E_+^{Y_0}(T_1).
\]
Hence a source end that is sent toward the negative target end for
the cutoff $R_1$ cannot be sent toward the positive target end for
the larger cutoff $R_2$, and similarly for the positive source end.
Therefore the same alternative holds for every $R>0$.

Consequently either
\[
  F\bigl(E_-^{Y_0}(T)\bigr)\subset E_-^{Y_1}(R),
  \qquad
  F\bigl(E_+^{Y_0}(T)\bigr)\subset E_+^{Y_1}(R)
\]
for sufficiently large $T$ and every $R>0$, or the two target ends
are interchanged.  These are precisely the two alternatives in
\cref{def:end-order}.
\end{proof}

A homotopy $H_s\colon X\to X'$, $s\in[0,1]$, is \emph{proper} if the combined map
\[
  H\colon [0,1]\times X\longrightarrow X',
  \qquad
  H(s,x)=H_s(x),
\]
is proper.

\section{Symplectic volume and the order of the ends}\label{sec:ends}

The only analytic input needed to control the ends is the elementary asymmetry of the symplectic volume form \eqref{eq:symplectic-volume-form}.

\begin{lemma}[Volumes of the standard ends]\label{lem:end-volumes}
Let $(Y,\alpha)$ be a closed positive contact three-manifold and put $\omega=d(e^t\alpha)$.  For every $T\in\R$,
\begin{align}
  \Vol_\omega\bigl((-\infty,T]\times Y\bigr)
  &=\frac{e^{2T}}{2}\int_Y\alpha\wedge d\alpha<\infty,
  \label{eq:negative-end-volume}\\
  \Vol_\omega\bigl([T,\infty)\times Y\bigr)
  &=\infty.
  \label{eq:positive-end-volume}
\end{align}
Here volume is computed using $\omega^2/2$.
\end{lemma}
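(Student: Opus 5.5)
The plan is a direct computation with the volume form \eqref{eq:symplectic-volume-form} and Fubini's theorem. By \eqref{eq:symplectic-volume-form},
\[
  \tfrac12\omega^2=e^{2t}\,dt\wedge\alpha\wedge d\alpha .
\]
Since $\alpha\wedge d\alpha>0$ is a positive volume form on $Y$ for its given orientation, this is a positive density on $\R\times Y$ in the product orientation. Integrals of it over Borel subsets therefore agree with the measure-theoretic volume, and the computation can be done with nonnegative integrands.

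Write $V:=\int_Y\alpha\wedge d\alpha$. This is a finite positive number because $Y$ is closed and the integrand is a positive volume form.

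For a product region $I\times Y$ with $I\subset\R$ an interval, Tonelli's theorem applied to the nonnegative integrand $e^{2t}$ against the product measure $dt\otimes(\alpha\wedge d\alpha)$ gives
\[
  \Vol_\omega(I\times Y)=\Bigl(\int_I e^{2t}\,dt\Bigr)\,V .
\]
Taking $I=(-\infty,T]$ gives $\int_{-\infty}^T e^{2t}\,dt=e^{2T}/2$, which yields \eqref{eq:negative-end-volume}. Taking $I=[T,\infty)$ gives $\int_T^\infty e^{2t}\,dt=\infty$, and since $V>0$ this yields \eqref{eq:positive-end-volume}.

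There is no genuine obstacle here. The only points that need care are the sign and orientation conventions, which ensure that the form is positive so that "volume" makes sense. These were fixed in \cref{sec:conventions} and guarantee $V>0$. The remaining point is to justify using Fubini/Tonelli on the noncompact region, which is fine because the integrand is nonnegative, so monotone convergence over $[-n,T]$ and $[T,n]$ also suffices.
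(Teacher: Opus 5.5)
Your proposal is correct and follows the paper's proof: both use the volume form $\tfrac12\omega^2=e^{2t}\,dt\wedge\alpha\wedge d\alpha$, the bound $0<\int_Y\alpha\wedge d\alpha<\infty$, and the two one-dimensional integrals $\int_{-\infty}^T e^{2t}\,dt=e^{2T}/2$ and $\int_T^\infty e^{2t}\,dt=\infty$. The only difference is that you explicitly justify the product decomposition on the noncompact region via Tonelli, which the paper leaves implicit.
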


\begin{proof}
By \eqref{eq:symplectic-volume-form},
\[
  \frac{1}{2}\omega^2=e^{2t}\,dt\wedge\alpha\wedge d\alpha.
\]
Since $Y$ is closed and $\alpha\wedge d\alpha$ is a positive volume form,
\[
  0<\int_Y\alpha\wedge d\alpha<\infty.
\]
The two assertions follow from
\[
  \int_{-\infty}^T e^{2t}\,dt=\frac{e^{2T}}{2},
  \qquad
  \int_T^\infty e^{2t}\,dt=\infty.
\]
\end{proof}

\begin{proposition}[End-order preservation]\label{prop:end-preservation}
For $i=0,1$, let $Y_i$ be a closed connected oriented three-manifold,
and let $\alpha_i$ be a positive contact form on $Y_i$.  Then every
symplectomorphism
\[
  \Phi\colon
  \bigl(\R\times Y_0,d(e^t\alpha_0)\bigr)
  \longrightarrow
  \bigl(\R\times Y_1,d(e^t\alpha_1)\bigr)
\]
is end-order-preserving.
\end{proposition}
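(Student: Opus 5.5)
By \cref{lem:homeomorphism-end-order}, applied to the diffeomorphism $\Phi$, it is either end-order-preserving or end-order-reversing. So it suffices to rule out the reversing alternative. I will do this by comparing symplectic volumes, using \cref{lem:end-volumes}.

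Suppose $\Phi$ is end-order-reversing. Fix some $R>0$ and choose $T>0$ as in \cref{def:end-order}, so that
\[
  \Phi\bigl(E_+^{Y_0}(T)\bigr)\subset E_-^{Y_1}(R).
\]
Since $\Phi^*\omega_1=\omega_0$, we have $\Phi^*(\omega_1^2/2)=\omega_0^2/2$. Moreover, $\Phi$ is orientation preserving (both symplectic orientations agree with the product orientations), so the change-of-variables formula applies without sign issues. Because $\Phi$ is an injective diffeomorphism onto its image, it follows that
\[
  \Vol_{\omega_0}\bigl(E_+^{Y_0}(T)\bigr)
  =\Vol_{\omega_1}\bigl(\Phi(E_+^{Y_0}(T))\bigr)
  \le \Vol_{\omega_1}\bigl(E_-^{Y_1}(R)\bigr).
\]
The inequality holds because the volume form is positive and the image is contained in $E_-^{Y_1}(R)$. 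Both sides are defined as possibly infinite integrals of a positive density over measurable (closed) sets.

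By \eqref{eq:positive-end-volume} the left-hand side is infinite. By \eqref{eq:negative-end-volume} with $T=-R$, the right-hand side equals $\tfrac{e^{-2R}}{2}\int_{Y_1}\alpha_1\wedge d\alpha_1<\infty$. This is a contradiction, so $\Phi$ is end-order-preserving.

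There is no substantial obstacle. The only points needing care are the following. First, volume preservation uses invariance of $\omega^2$, not of $\omega$ alone. Second, the comparison of volumes is done for a noncompact set, which is justified by monotone convergence over the exhaustion $[T,T+n]\times Y_0$. Third, a symplectomorphism between these manifolds is a homeomorphism, so \cref{lem:homeomorphism-end-order} applies.
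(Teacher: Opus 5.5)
Your proof is correct and follows the same strategy as the paper. Both arguments use \cref{lem:homeomorphism-end-order} to reduce to excluding the reversing case, then rule it out by comparing the infinite volume of a positive end with the finite volume of a negative end via \cref{lem:end-volumes}. The only difference is which map you apply the reversing condition to: the paper applies it to $\Phi^{-1}$, placing $E_+^{Y_1}(T)$ inside $\Phi\bigl((-\infty,0]\times Y_0\bigr)$, whereas you apply it to $\Phi$ directly, which spares you the paper's unproved (though easy) claim that $\Phi^{-1}$ is also reversing.
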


\begin{proof}
Since a symplectomorphism is, in particular, a homeomorphism,
\cref{lem:homeomorphism-end-order} implies that $\Phi$ is either
end-order-preserving or end-order-reversing.  Suppose for contradiction
that it is end-order-reversing.  Then $\Phi^{-1}$ is also
end-order-reversing.  Applying \cref{def:end-order} to $\Phi^{-1}$ with target cutoff
$1$, we obtain $T>0$ such that
\[
  \Phi^{-1}\bigl(E_+^{Y_1}(T)\bigr)
  \subset E_-^{Y_0}(1)
  \subset E_-^{Y_0}(0).
\]
Equivalently,
\[
  E_+^{Y_1}(T)
  \subset \Phi\bigl(E_-^{Y_0}(0)\bigr).
\]
The set $E_-^{Y_0}(0)=(-\infty,0]\times Y_0$ has finite symplectic volume, whereas $E_+^{Y_1}(T)$ has infinite symplectic volume by \cref{lem:end-volumes}.  Since a symplectomorphism preserves the symplectic volume measure, this is impossible.  Hence $\Phi$ is end-order-preserving.
\end{proof}

Write an end-order-preserving proper map in product coordinates as
\begin{equation}\label{eq:coordinate-map}
  F(t,y)=\bigl(a(t,y),b(t,y)\bigr),
\end{equation}
where $a\colon\R\times Y_0\to\R$ and $b\colon\R\times Y_0\to Y_1$.

\begin{lemma}[Uniform divergence of the height component]\label{lem:uniform-divergence}
Let $F$ be an end-order-preserving proper map of the form \eqref{eq:coordinate-map}.  Then
\begin{equation}\label{eq:uniform-plus}
  \lim_{t\to+\infty}\inf_{y\in Y_0}a(t,y)=+\infty,
\end{equation}
and
\begin{equation}\label{eq:uniform-minus}
  \lim_{t\to-\infty}\sup_{y\in Y_0}a(t,y)=-\infty.
\end{equation}
Equivalently, for every $R>0$ there is $T>0$ such that, for all $y\in Y_0$,
\[
  t\ge T\Longrightarrow a(t,y)>R,
  \qquad
  t\le -T\Longrightarrow a(t,y)<-R.
\]
\end{lemma}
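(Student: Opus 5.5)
This is essentially a direct unwinding of \cref{def:end-order}; properness of $F$ is not even needed beyond what the definition already packages.

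Fix $R>0$. By end-order preservation there is $T>0$ with
\[
  F\bigl(E_+^{Y_0}(T)\bigr)\subset E_+^{Y_1}(R+1),
  \qquad
  F\bigl(E_-^{Y_0}(T)\bigr)\subset E_-^{Y_1}(R+1).
\]
In the coordinates \eqref{eq:coordinate-map}, the first inclusion says that $a(t,y)\ge R+1>R$ for every $y\in Y_0$ and every $t\ge T$. Hence $\inf_{y}a(t,y)\ge R+1$ for $t\ge T$. The second inclusion similarly gives $\sup_y a(t,y)\le -R-1<-R$ for $t\le -T$. Using the cutoff $R+1$ rather than $R$ is just to get the strict inequalities in the ``equivalently'' formulation.

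Since $R$ was arbitrary, these bounds are exactly the $\varepsilon$-$N$ formulations of the limits \eqref{eq:uniform-plus} and \eqref{eq:uniform-minus}. The infimum and supremum are finite because $Y_0$ is compact, and in any case they are bounded below by $R+1$ and above by $-R-1$ respectively. So the limit statements and the quantified version are two readings of the same inequalities. For the converse direction of the stated equivalence, if the quantified version holds for all $R$, the limits follow immediately from the definition of a limit.

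There is no real obstacle here. The only point needing care is that the uniformity in $y$ comes for free, because \cref{def:end-order} controls the image of the whole slab $E_\pm^{Y_0}(T)$, not individual points.
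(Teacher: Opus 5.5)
Your proof is correct and is essentially the paper's argument: apply \cref{def:end-order} with target cutoff $R+1$ and read off the uniform bounds on $a$ from the inclusions of the whole slabs $E_\pm^{Y_0}(T)$. Your extra remarks, on finiteness of the infimum and supremum and on the converse direction of the equivalence, are harmless additions.
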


\begin{proof}
Apply \cref{def:end-order} with target cutoff $R+1$.  There is $T>0$ such that
\[
  F\bigl(E_+^{Y_0}(T)\bigr)\subset E_+^{Y_1}(R+1),
  \qquad
  F\bigl(E_-^{Y_0}(T)\bigr)\subset E_-^{Y_1}(R+1).
\]
Thus $a(t,y)>R$ for $t\ge T$ and $a(t,y)<-R$ for $t\le -T$, uniformly in $y$.  This is equivalent to \eqref{eq:uniform-plus} and \eqref{eq:uniform-minus}.
\end{proof}

\section{A proper product normal form}\label{sec:proper-product}

\begin{lemma}[Proper-homotopy invariance]\label{lem:proper-homotopy-invariance}
Let $X_0$ and $X_1$ be smooth manifolds, and let
\[
  H\colon[0,1]\times X_0\longrightarrow X_1
\]
be a proper homotopy with endpoints $H_0$ and $H_1$.  Then $H_0$ and $H_1$ induce the same pullback on compactly supported cohomology.  If $X_i=\R\times Y_i$ with $Y_i$ closed and connected, then $H_0$ and $H_1$ also have the same order behavior on the standard ends.  More precisely, if one endpoint is end-order-preserving or end-order-reversing, then the whole homotopy has the same property uniformly in its parameter.
\end{lemma}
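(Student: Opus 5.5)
The plan is to handle the two assertions separately: the cohomological one is standard, and the end-order one rests on compactness of $[0,1]$.

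\emph{Compactly supported cohomology.} I would use the standard fact that a proper map induces a pullback on compactly supported de~Rham cohomology, and prove the homotopy invariance by a chain homotopy. Smooth $H$ first by a proper smooth approximation; an approximation within distance $1$ in a complete metric remains proper and properly homotopic. Then use the fiber-integration operator
\[
  K\beta=\int_{[0,1]}H^*\beta .
\]
The homotopy formula $H_1^*\beta-H_0^*\beta=dK\beta+Kd\beta$ holds on forms. Properness of $H$ ensures that $H^*\beta$ has support in a compact subset of $[0,1]\times X_0$ whenever $\beta$ has compact support. Hence $K\beta$ is compactly supported, and $K$ is a chain homotopy on $\Omega_c^*$. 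Alternatively one may work with sheaf or singular cohomology with compact supports, where proper homotopy invariance is classical; either route suffices.

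\emph{End-order behavior.} Write $H(s,t,y)=(a_s(t,y),b_s(t,y))$ and suppose $H_0$ is end-order-preserving; the reversing case is symmetric.
\begin{enumerate}[label=\textup{(\arabic*)},leftmargin=2.4em]
  \item Fix $R>0$. By properness of $H$, the preimage $H^{-1}([-R,R]\times Y_1)$ is compact. It is therefore contained in $[0,1]\times(-T,T)\times Y_0$ for some $T>0$.
  \item For each $s$, the sets $\{s\}\times E_\pm^{Y_0}(T)$ are connected, and they are mapped into the complement of $[-R,R]\times Y_1$. That complement has exactly the two components $(-\infty,-R)\times Y_1$ and $(R,\infty)\times Y_1$.
  \item The whole set $[0,1]\times E_+^{Y_0}(T)$ is connected, since $Y_0$ is connected. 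Its image therefore lies in a single component. At $s=0$ this component is the positive one, because $H_0$ is end-order-preserving. The same argument applies to $[0,1]\times E_-^{Y_0}(T)$ and the negative component.
  \item Consequently $H_s(E_\pm^{Y_0}(T))\subset E_\pm^{Y_1}(R)$ for all $s$ simultaneously, which is the uniform statement. In particular $H_1$ is end-order-preserving. By the symmetric argument with $s\mapsto 1-s$, the property transfers in either direction between the endpoints.
\end{enumerate}

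A subtle point is the choice of component at $s=0$. Step~(1) gives a $T$ that may exceed the cutoff supplied by the definition for $H_0$. The comparison goes through by the nesting argument already used at the end of \cref{lem:homeomorphism-end-order}: enlarge $T$ so that it dominates both cutoffs. Then $E_+^{Y_0}(T)$ is contained in the set that $H_0$ is known to send positively.

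I expect the main obstacle to be the compactly supported cohomology claim in the smooth setting, specifically making the smoothing proper-homotopic while preserving properness. If this becomes awkward, I would simply cite proper homotopy invariance of $H_c^*$ for singular or sheaf cohomology; the end-order part is elementary.
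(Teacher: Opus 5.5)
Your proposal is correct. The end-order part is the paper's own argument, including the enlargement of $T$ past the endpoint's cutoff. Properness puts $H^{-1}([-R,R]\times Y_1)$ inside $[0,1]\times(-T,T)\times Y_0$. The connected sets $[0,1]\times E_\pm^{Y_0}(T)$ then each land in a single component of the complement, and that component is read off at the known endpoint.

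Where you differ is the compactly supported cohomology claim. The paper passes to one-point compactifications. Properness of $H$ is exactly what makes $H^+\colon[0,1]\times X_0^+\to X_1^+$, with $H^+(s,\infty)=\infty$, a continuous based homotopy, and the identification $H_c^*(X)\cong\widetilde H^*(X^+)$ finishes. That route needs no smoothing, works for merely continuous proper homotopies, and gives integer coefficients, which is the form quoted in \cref{lem:degree-h}.

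Your fiber-integration chain homotopy is also correct, and it shows transparently where properness enters: $H^*\beta$ has compact support. What it buys is an explicit chain homotopy; what it costs is two things:
\begin{enumerate}
  \item It only yields $H_c^*(\,\cdot\,;\R)$. That is still enough for \cref{lem:degree-h}, since degree is detected by real cohomology, but it is weaker than the integral statement the paper uses.
  \item It needs a smoothing step, which you state too loosely. A smooth map uniformly within distance $1$ of $H$ is indeed proper, but it need not be homotopic to $H$ at all. For example, $(t,\theta)\mapsto(t,\theta)$ and $(t,\theta)\mapsto(t,2\theta)$ on $\R\times S^1$, with the circle of length less than $1$, are uniformly $1$-close but not homotopic.
\end{enumerate}

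To repair the smoothing, approximate within a small positive function on $[0,1]\times X_0$, so that a tubular-neighborhood straight-line homotopy exists. That homotopy stays within distance $1$ of $H$ and is therefore proper by your own argument. You must also smooth relative to the endpoints, or define $H_i^*$ for continuous $H_i$ via such approximations.

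Your fallback of citing proper-homotopy invariance of singular compactly supported cohomology is exactly what the $X^+$ argument establishes in two lines.
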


\begin{proof}
A proper map extends over the one-point compactifications by sending the point at infinity to the point at infinity.  Properness of the combined map $H$ implies that these extensions form a based homotopy
\[
  H^+\colon[0,1]\times X_0^+\longrightarrow X_1^+.
\]
Under the standard identification
\[
  H_c^*(X)\cong\widetilde H^*(X^+),
\]
the based homotopy $H^+$ shows that $H_0$ and $H_1$ induce the same
pullback on compactly supported cohomology.

For the statement about the ends, fix $R\ge0$.  The inverse image
\[
  H^{-1}\bigl([-R,R]\times Y_1\bigr)
\]
is compact, so it is contained in $[0,1]\times(-T,T)\times Y_0$ for some $T$.  After increasing $T$ so that the prescribed end behavior holds for $H_0$, the connected sets
\[
  [0,1]\times E_+^{Y_0}(T),
  \qquad
  [0,1]\times E_-^{Y_0}(T)
\]
have images disjoint from $[-R,R]\times Y_1$.  Each image is therefore contained in a single component of its complement.  The component is determined at $s=0$, so the same assignment of ends holds for every $s\in[0,1]$, with the same $T$.
\end{proof}

Let
\[
  F\colon\R\times Y_0\longrightarrow\R\times Y_1,
  \qquad
  F(t,y)=\bigl(a(t,y),b(t,y)\bigr),
\]
be an end-order-preserving diffeomorphism.  Define the cross-sectional map
\begin{equation}\label{eq:definition-h}
  h\colon Y_0\longrightarrow Y_1,
  \qquad
  h(y)=b(0,y).
\end{equation}

\begin{proposition}[Explicit proper product reduction]\label{prop:proper-product-reduction}
The maps
\begin{equation}\label{eq:proper-homotopy-formula}
  H_s(t,y)
  =\bigl((1-s)a(t,y)+st,\ b((1-s)t,y)\bigr),
  \qquad 0\le s\le1,
\end{equation}
form an end-order-preserving proper homotopy satisfying
\[
  H_0=F,
  \qquad
  H_1=\id_\R\times h.
\]
In particular, $h$ is a homotopy equivalence.
\end{proposition}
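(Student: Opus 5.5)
The plan has three parts: check the endpoints and continuity, prove properness of the combined map, then deduce end-order preservation and the homotopy-equivalence claim.

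\emph{Endpoints and continuity.} At $s=0$ the formula gives $(a(t,y),b(t,y))=F(t,y)$. At $s=1$ it gives $(t,b(0,y))=(t,h(y))$. Continuity of the combined map $H\colon[0,1]\times\R\times Y_0\to\R\times Y_1$ is clear, since $a$ and $b$ are continuous.

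\emph{Properness.} Because $Y_1$ is compact, it suffices to show that the preimage of any slab $[-R,R]\times Y_1$ is compact. For this I only need to bound $t$ on the preimage. By \cref{lem:uniform-divergence} there is $T>0$ such that $a(t,y)>R$ for $t\ge T$ and $a(t,y)<-R$ for $t\le -T$, uniformly in $y$. Take $T'=\max(T,R+1)$. If $t\ge T'$, then $(1-s)a(t,y)+st$ is a convex combination of two numbers both exceeding $R$, hence exceeds $R$. Symmetrically, for $t\le -T'$ it is less than $-R$. So $H^{-1}([-R,R]\times Y_1)$ is a closed subset of $[0,1]\times[-T',T']\times Y_0$, which is compact. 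This convex-combination bound is the only real content; it is exactly where the uniformity in \cref{lem:uniform-divergence} is used. The second component $b((1-s)t,y)$ plays no role in properness.

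\emph{End-order preservation.} The same estimate does more: for $t\ge T'$ the first coordinate of $H_s$ exceeds $R$ for every $s$, and for $t\le -T'$ it is below $-R$. This gives end-order preservation uniformly in $s$ directly. Alternatively, it follows from \cref{lem:proper-homotopy-invariance}, since $H_0=F$ is end-order-preserving.

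\emph{Homotopy equivalence.} First, $\id_\R\times h$ is homotopic to $F$ through $H_s$, and $F$ is a homeomorphism. Let $\iota_i\colon Y_i\to\R\times Y_i$ be $y\mapsto(0,y)$ and $\pi_i$ the projections; these are mutually inverse homotopy equivalences. Then
\[
h=\pi_1\circ(\id_\R\times h)\circ\iota_0\simeq \pi_1\circ F\circ\iota_0,
\]
and the right-hand side is a composition of homotopy equivalences. Hence $h$ is a homotopy equivalence, with homotopy inverse $\pi_0\circ F^{-1}\circ\iota_1$.
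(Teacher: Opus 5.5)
Your proposal is correct and follows the paper's proof essentially step for step. Both use the same convex-combination estimate from \cref{lem:uniform-divergence} to get properness and the uniform end inclusions together, and both show $h$ is a homotopy equivalence via the slice inclusions and projections. Your version is slightly more explicit: you write down the homotopy inverse $\pi_0\circ F^{-1}\circ\iota_1$. You could also note that $\pi_1\circ F\circ\iota_0=h$ holds exactly, since $b(0,y)=h(y)$.
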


\begin{proof}
At $s=0$, formula \eqref{eq:proper-homotopy-formula} gives
\[
  H_0(t,y)=\bigl(a(t,y),b(t,y)\bigr)=F(t,y),
\]
whereas at $s=1$ it gives
\[
  H_1(t,y)=\bigl(t,b(0,y)\bigr)=(\id_\R\times h)(t,y).
\]
We verify properness uniformly in $s$.  Let $K\subset\R\times Y_1$ be compact.  Choose $R>0$ such that
\[
  K\subset[-R,R]\times Y_1.
\]
By \cref{lem:uniform-divergence}, after increasing $T>R$ we may arrange that, for every $y\in Y_0$,
\[
  t\ge T\Longrightarrow a(t,y)>R,
  \qquad
  t\le -T\Longrightarrow a(t,y)<-R.
\]
For $t\ge T$, both $a(t,y)$ and $t$ are greater than $R$, so
\[
  (1-s)a(t,y)+st>R
\]
for every $s\in[0,1]$.  Similarly, for $t\le -T$ the same convex combination is less than $-R$.  Therefore
\[
  H^{-1}(K)
  \subset [0,1]\times[-T,T]\times Y_0.
\]
The set on the right is compact, and $H^{-1}(K)$ is closed in it.  Hence $H$ is proper.

The estimates also give the required uniform end inclusions.  More explicitly, for the chosen $R$ and $T$ they give, simultaneously for all $s\in[0,1]$,
\[
  H_s\bigl(E_+^{Y_0}(T)\bigr)\subset E_+^{Y_1}(R),
  \qquad
  H_s\bigl(E_-^{Y_0}(T)\bigr)\subset E_-^{Y_1}(R).
\]
Thus the homotopy is end-order-preserving in the uniform sense of \cref{def:end-order}.

Since $F$ is a diffeomorphism, it is a homotopy equivalence.  The endpoint $\id_\R\times h$ is homotopic to $F$ and hence is also a homotopy equivalence.  Composing with the standard deformation retractions of $\R\times Y_i$ onto $\{0\}\times Y_i$ shows that $h$ is a homotopy equivalence.
\end{proof}

The orientation of the cross-sectional map $h$ will be needed later.

\begin{lemma}[Degree of the cross-sectional map]\label{lem:degree-h}
If $F$ preserves the product orientations, then the homotopy equivalence $h$ in \eqref{eq:definition-h} has degree $+1$.
\end{lemma}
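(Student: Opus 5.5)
The plan is to compute the degree of $h$ through compactly supported cohomology, using the proper homotopy of \cref{prop:proper-product-reduction} together with \cref{lem:proper-homotopy-invariance}.

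\emph{Step 1: top-degree pullbacks agree.} For an oriented connected $4$-manifold $X$ we have $H^4_c(X)\cong\Z$, with generator the class dual to the fundamental orientation class. Since $F$ is a diffeomorphism that preserves the product orientations, and is in particular proper, $F^*$ sends the generator of $H^4_c(\R\times Y_1)$ to the generator of $H^4_c(\R\times Y_0)$. Hence $F$ has proper degree $+1$. By \cref{lem:proper-homotopy-invariance}, the endpoints $H_0=F$ and $H_1=\id_\R\times h$ of the proper homotopy induce the same map on $H^4_c$. Therefore
\[
  (\id_\R\times h)^*=F^*=+1 \quad\text{on } H^4_c .
\]

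\emph{Step 2: product formula.} Next I will identify $(\id_\R\times h)^*$ on $H^4_c$ with multiplication by $\deg h$. I will use the Künneth isomorphism
\[
  H^4_c(\R\times Y_i)\cong H^1_c(\R)\otimes H^3(Y_i),
\]
which is natural for product maps. Alternatively, I can work with de Rham representatives of the form $\rho(t)\,dt\wedge\mu_i$. Here $\rho$ is a bump function with $\int\rho=1$, and $\mu_i$ is a volume form on $Y_i$ with total integral $1$, positive for the given orientation. Then
\[
  (\id\times h)^*(\rho\,dt\wedge\mu_1)=\rho\,dt\wedge h^*\mu_1,
\]
and integrating over $\R\times Y_0$ with the orientation that has $\partial_t$ first gives
\[
  \int_\R\rho\,dt\cdot\int_{Y_0}h^*\mu_1=\deg h .
\]
Since $h$ is only continuous, I will first replace it by a smooth map homotopic to it; the degree is unchanged, and $(\id\times h)^*$ is unchanged because the homotopy $\id\times h_s$ is proper. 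Alternatively, the whole argument can be carried out with singular cohomology via the cross product.

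\emph{Step 3: conclusion.} Comparing Steps 1 and 2 gives $\deg h=+1$.

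The only delicate point is the orientation and sign bookkeeping. The product orientation, with $\partial_t$ first, must be used consistently on both sides, so that the same Künneth sign appears for $Y_0$ and $Y_1$ and cancels. I would check this directly with the explicit forms above rather than with abstract cross-product conventions. I would also check that $\deg h$ is well defined: $h$ is a homotopy equivalence between closed connected oriented manifolds, so its degree is $\pm1$ and is determined by its action on $H^3$.
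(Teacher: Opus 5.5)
Your proposal is correct and follows the paper's proof essentially verbatim: the paper also uses the compactly supported orientation classes, applies \cref{lem:proper-homotopy-invariance} to pass from $F$ to $\id_\R\times h$ on $H^4_c$, and reads off $h^*\mu_1=\mu_0$ from the K\"unneth identification $H^4_c(\R\times Y_i;\Z)\cong H^1_c(\R;\Z)\otimes H^3(Y_i;\Z)$. One small correction: $h(y)=b(0,y)$ is already smooth because $F$ is a diffeomorphism, so your smoothing step is unnecessary, though harmless.
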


\begin{proof}
Let
\[
  [\R\times Y_i]_c\in H_c^4(\R\times Y_i;\Z)
\]
denote the compactly supported orientation class.  An orientation-preserving diffeomorphism pulls the positive generator back to the positive generator, so
\[
  F^*[\R\times Y_1]_c=[\R\times Y_0]_c.
\]
Properly homotopic maps induce the same pullback on compactly supported cohomology by \cref{lem:proper-homotopy-invariance}.  Hence \cref{prop:proper-product-reduction} gives
\[
  (\id_\R\times h)^*[\R\times Y_1]_c
  =[\R\times Y_0]_c.
\]
Let
\[
  \mu_i\in H^3(Y_i;\Z)
\]
denote the positive cohomological orientation class.  Under the
K\"unneth identification
\[
  H_c^4(\R\times Y_i;\Z)
  \cong H_c^1(\R;\Z)\otimes H^3(Y_i;\Z),
\]
the pullback by $\id_\R\times h$ is the identity on the first factor
and $h^*$ on the second.  It follows that
\[
  h^*\mu_1=\mu_0,
\]
and hence $\deg h=+1$.
\end{proof}

\section{Invertible cobordisms and the cross-sectional marking}
\label{sec:cobordism}\label{sec:invertible-cobordisms}

\begin{definition}\label{def:invertible-cobordism}
A smooth cobordism $(W,j_Y,j_{Y'})$ from a closed manifold $Y$ to a
closed manifold $Y'$ is \emph{invertible} if there is a cobordism from $Y'$
to $Y$ whose two composites with $W$ are product cobordisms, relative to
their outer boundary parametrizations.  It is an \emph{$h$-cobordism} if
both boundary inclusions are homotopy equivalences.

If $W$ is an $h$-cobordism from $Y$ to $Y'$ and
$q\colon W\to Y$ is a homotopy inverse of the incoming boundary inclusion,
then
\begin{equation}\label{eq:natural-equivalence}
  q\circ j_{Y'}\colon Y'\longrightarrow Y
\end{equation}
is called a \emph{natural homotopy equivalence} associated to $W$.  Its
homotopy class is independent of the chosen homotopy inverse.
\end{definition}

We use the following two results without reproving them.

\begin{theorem}[Hausmann--Jahren]\label{thm:HJ-invertible}
Let $Y$ and $Y'$ be closed smooth manifolds.
\begin{enumerate}[label=\textup{(\roman*)},leftmargin=2.4em]
  \item The open products $\R\times Y$ and $\R\times Y'$ are diffeomorphic
  if and only if there is an invertible smooth cobordism from $Y$ to $Y'$.
  \item Every invertible smooth cobordism is an $h$-cobordism.
\end{enumerate}
\end{theorem}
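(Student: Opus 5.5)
Since the paper quotes \cref{thm:HJ-invertible} from Hausmann--Jahren, what follows is a plan for reconstructing it, not a complete proof. Throughout, gluing a product cobordism $Y\times[0,1]$ onto any cobordism along a collar does not change its diffeomorphism type relative to the boundary parametrizations. I will use this freely, and it makes composition of cobordism classes associative. The argument has two parts.

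\emph{Part (i), direction ``invertible $\Rightarrow$ diffeomorphic products''.} Suppose $W$ is invertible with inverse $W'$. Form the bi-infinite composite
\[
  X=\cdots\cup W\cup_{Y'}W'\cup_Y W\cup_{Y'}W'\cup\cdots .
\]
Grouping the pieces as $(W\cup_{Y'}W')$ exhibits $X$ as a bi-infinite union of copies of $Y\times[0,1]$, so $X\cong\R\times Y$. Grouping them as $(W'\cup_Y W)$ instead gives $X\cong\R\times Y'$. This is an Eilenberg swindle.

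\emph{Part (i), converse.} Let $F\colon\R\times Y\to\R\times Y'$ be a diffeomorphism. As in \cref{lem:homeomorphism-end-order}, I may compose with a reflection so that $F$ is end-order-preserving. Using compactness of images of slices, choose $0<S$ and $T<T'$ so that the following four hypersurfaces are disjoint and occur in this order:
\[
  F(\{0\}\times Y),\qquad \{T\}\times Y',\qquad F(\{S\}\times Y),\qquad \{T'\}\times Y'.
\]
Let $W$, $W'$, $W''$ be the regions between consecutive hypersurfaces. Then
\[
  W\cup W'=F([0,S]\times Y)\quad\text{is a product},\qquad
  W'\cup W''=[T,T']\times Y'\quad\text{is a product}.
\]
By associativity,
\[
  W\cong W\cup(W'\cup W'')\cong(W\cup W')\cup W''\cong W''
\]
relative to the boundary parametrizations. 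Hence $W'\cup W\cong W'\cup W''$ is a product as well, so $W$ is invertible with inverse $W'$.

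\emph{Part (ii).} Let $W$ be invertible with inverse $W'$. From $W\cup_{Y'}W'\cong Y\times[0,1]$, the inclusion $j_Y$ followed by $W\hookrightarrow Y\times[0,1]\to Y$ is homotopic to the identity. So $j_Y$ has a left homotopy inverse $r$. In particular $j_Y$ is injective on $\pi_1$ and split injective on homology with all local coefficients, in particular on $H_*(\widetilde{\cdot})$ of universal covers. Symmetrically, $W'\cup_Y W\cong Y'\times[0,1]$ gives a left inverse for $j_{Y'}$.

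\emph{Surjectivity in part (ii), the main obstacle.} To show $j_Y$ is a homotopy equivalence, I would prove that $H_*(\widetilde W,\widetilde Y)=0$ and that $\pi_1(Y)\to\pi_1(W)$ is onto, and then apply Whitehead's theorem.
\begin{itemize}
  \item For $\pi_1$: van Kampen on $W'\cup_Y W\cong Y'\times I$ shows that $\pi_1(W)$ is generated by the image of $\pi_1(Y)$. Indeed, $\pi_1(Y')\to\pi_1(W'\cup W)$ is an isomorphism, and $\pi_1(W)$ injects into the amalgam.
  \item For homology: split injectivity of $Y'\to W$ makes the long exact sequences split. Poincar\'e--Lefschetz duality with $\Z[\pi_1]$ coefficients gives $H_*(\widetilde W,\widetilde Y)\cong H^{n-*}(\widetilde W,\widetilde Y')$, and the splitting shows that this is a direct summand which must vanish.
\end{itemize}
I expect this duality step, and the bookkeeping of equivariant coefficients in it, to be the delicate part. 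My fallback is the cleaner route of realizing $W$ as a retract of a product via $W\subset W\cup W'\cup W\cong W$ with the composites in both directions homotopic to identities.
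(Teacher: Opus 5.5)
Your part~(i) is sound. Its converse direction uses the same alternating-slice construction that the paper attributes to Hausmann--Jahren in the proof of \cref{prop:natural-marking}: images of slices interleaved with standard slices, followed by $W\cong W\cup(W'\cup W'')\cong(W\cup W')\cup W''\cong W''$. The paper itself gives no proof of this theorem beyond the citation.

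Part~(ii), however, has a genuine gap exactly at the step you flagged. Everything you feed into the homology argument also holds for $W_0=(Y\times[0,1])\#(S^2\times S^2)$ with $Y$ a closed connected $3$-manifold. That is: boundary inclusions that are $\pi_1$-isomorphisms admitting retractions (retract by mapping the punctured $S^2\times S^2$ into the contractible $4$-ball it replaces), plus Poincar\'e--Lefschetz duality. Yet $H_2(W_0,Y\times\{0\})\cong\Z^2$. So exhibiting $H_*(\widetilde W,\widetilde Y)$ as a direct summand of $H_*(\widetilde W)$, or dually of $H^*(\widetilde W)$, gives no reason for it to vanish. You must use the product structure of a composite, not only the retraction $r$ it induces. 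Your van Kampen sentence is also incomplete. You need the copy of $Y'$ inside $W'$, so that the amalgam is the image of $\pi_1(W')$. You also need the normal-form fact that the images of $\pi_1(W')$ and $\pi_1(W)$ meet exactly in the image of $\pi_1(Y)$.

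Your fallback names the right idea, namely that $W$ is a retract of a product. The retract should be taken inside $W\cup_{Y'}W'$, and then no $\pi_1$, duality, or Whitehead argument is needed. Since $W'\cup_Y W\cong Y'\times[0,1]$ rel boundary, there is $r'\colon W'\to Y'$ restricting to the identity on the incoming boundary of $W'$. Let $\rho\colon W\cup_{Y'}W'\to W$ be $\id_W$ on $W$ and $j_{Y'}\circ r'$ on $W'$. With $\iota\colon W\hookrightarrow W\cup_{Y'}W'$ we have $\rho\circ\iota=\id_W$. Also $k:=\iota\circ j_Y$ is a homotopy equivalence, because $W\cup_{Y'}W'\cong Y\times[0,1]$ rel $Y$. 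If $p$ is a homotopy inverse of $k$, then, since $j_Y=\rho\circ k$,
\[
  (p\circ\iota)\circ j_Y=p\circ k\simeq\id_Y,
  \qquad
  j_Y\circ(p\circ\iota)=\rho\circ(k\circ p)\circ\iota\simeq\rho\circ\iota=\id_W .
\]
On homology, $\rho$ makes $H_*(W,Y)$ a direct summand of $H_*(W\cup_{Y'}W',Y)=0$, which is presumably the summand you were after.

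The case of $j_{Y'}$ is symmetric: retract $W'\cup_Y W$ onto $W$ using the retraction of $W'$ onto its outgoing boundary $Y$ supplied by $W\cup_{Y'}W'\cong Y\times[0,1]$. Alternatively, the half-infinite swindle $W\cup_{Y'}\bigl(Y'\times[0,\infty)\bigr)\cong W\cup W'\cup W\cup\cdots\cong Y\times[0,\infty)$ rel $Y$ gives the conclusion at once.
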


\begin{proof}[Source]
Part~\textup{(i)} is
\cite[Proposition~3.3]{HausmannJahren2018}, and part~\textup{(ii)} is
\cite[Proposition~3.11]{HausmannJahren2018}.
\end{proof}

We now identify the particular invertible cobordism associated to the
proper product reduction.  Let
\[
  F\colon\R\times Y_0\longrightarrow\R\times Y_1,
  \qquad F(t,y)=(a(t,y),b(t,y)),
\]
be an end-order-preserving diffeomorphism, and put
\begin{equation}\label{eq:sigma-slice}
  \Sigma:=F(\{0\}\times Y_0).
\end{equation}
Let
\[
  U_-:=F((-\infty,0]\times Y_0).
\]
By compactness of $\Sigma$ and end-order preservation of $F^{-1}$,
we may choose $r>0$ so large that
\[
  \Sigma\subset(-r,\infty)\times Y_1
\]
and
\begin{equation}\label{eq:deep-end-contained}
  (-\infty,-r]\times Y_1\subset U_-.
\end{equation}
By \cref{lem:uniform-divergence}, after increasing $r$ if necessary there
is $T>0$ such that
\begin{equation}\label{eq:far-negative-below-r}
  F((-\infty,-T]\times Y_0)
  \subset(-\infty,-r)\times Y_1.
\end{equation}
Define
\begin{equation}\label{eq:cobordism-W}
  W_F:=U_-\cap([-r,\infty)\times Y_1).
\end{equation}
Then $W_F$ is compact by \eqref{eq:far-negative-below-r}, and its boundary
is the disjoint union of $\{-r\}\times Y_1$ and $\Sigma$.  We regard it as a
cobordism from $Y_1$ to $Y_0$ by the markings
\begin{equation}\label{eq:boundary-markings}
  j_-(y)=(-r,y),
  \qquad
  j_+(y)=F(0,y).
\end{equation}

\begin{proposition}[The natural marking is the cross-sectional map]
\label{prop:natural-marking}
The marked cobordism $(W_F,j_-,j_+)$ is invertible.  Its natural homotopy
equivalence is represented by the cross-sectional map
\[
  h\colon Y_0\longrightarrow Y_1,
  \qquad h(y)=b(0,y),
\]
from \eqref{eq:definition-h}.
\end{proposition}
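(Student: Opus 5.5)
The plan has two parts: first show that $(W_F,j_-,j_+)$ is invertible by cutting $\R\times Y_1$ along level hypersurfaces, then compute the natural marking directly using the projection $\pr_{Y_1}$.

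\emph{Invertibility.} I would build a candidate inverse inside $\R\times Y_1$. Choose $r'>0$ with $\Sigma\subset(-r,r')\times Y_1$, and set
\[
  W':=F([0,\infty)\times Y_0)\cap\bigl((-\infty,r']\times Y_1\bigr).
\]
By end-order preservation of $F$ and \cref{lem:uniform-divergence}, $W'$ is compact. Its boundary is $\Sigma\sqcup\{r'\}\times Y_1$, and I mark it by $j_+$ on $\Sigma$ and by $y\mapsto(r',y)$ on the other component, so it is a cobordism from $Y_0$ to $Y_1$. Gluing along $\Sigma$ gives
\[
  W_F\cup_\Sigma W'=[-r,r']\times Y_1,
\]
which is a product relative to the outer markings.

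By \cref{lem:uniform-divergence}, choose $\tau>0$ with $F(\{\tau\}\times Y_0)\subset(r',\infty)\times Y_1$. Let $W''$ be the compact region between $\{r'\}\times Y_1$ and $F(\{\tau\}\times Y_0)$, marked by $y\mapsto(r',y)$ and by $y\mapsto F(\tau,y)$. Then
\[
  W'\cup W''=F([0,\tau]\times Y_0),
\]
which is a product cobordism on $Y_0$ via $F$, and the markings match $\{0\}\times Y_0$ and $\{\tau\}\times Y_0$. Thus $W'$ has the left inverse $W_F$ and the right inverse $W''$. The standard associativity argument then gives
\[
  W_F\cong W_F\cup W'\cup W''\cong W''
\]
rel boundary. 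So both composites of $W_F$ and $W'$ are products, and $W_F$ is invertible with inverse $W'$.

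The only technical point is that gluing along the smooth hypersurfaces $\Sigma$, $\{r'\}\times Y_1$ and $F(\{\tau\}\times Y_0)$ yields smooth cobordisms with collars. This holds because these are disjoint embedded hypersurfaces inside a smooth manifold, so the regions between them are smooth compact submanifolds with boundary.

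\emph{The marking.} By \cref{thm:HJ-invertible}(ii), $W_F$ is an $h$-cobordism, so $j_-\colon Y_1\to W_F$ is a homotopy equivalence. Set $q:=\pr_{Y_1}|_{W_F}\colon W_F\to Y_1$. Since $q\circ j_-=\id_{Y_1}$, $q$ is a left inverse of a homotopy equivalence. A left homotopy inverse of a homotopy equivalence is automatically a two-sided homotopy inverse: if $g$ is any homotopy inverse, then $q\simeq q\circ j_-\circ g=g$. So $q$ is a legitimate choice in \eqref{eq:natural-equivalence}. Consequently
\[
  (q\circ j_+)(y)=\pr_{Y_1}F(0,y)=b(0,y)=h(y),
\]
so $h$ represents the natural homotopy equivalence. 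Because that homotopy class does not depend on the choice of inverse, this finishes the identification.

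I expect the main obstacle to be the invertibility bookkeeping, namely arranging that the same $W'$ serves as a two-sided inverse relative to the boundary parametrizations. I would handle it through the left-inverse/right-inverse trick above rather than by constructing both composites directly.
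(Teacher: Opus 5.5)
Your proposal is correct and follows the paper's proof. The paper cuts $\R\times Y_1$ along the same hypersurfaces $\{-r\}\times Y_1$, $\Sigma$, $\{s\}\times Y_1$, $F(\{v\}\times Y_0)$ (your $r'$ and $\tau$ are its $s$ and $v$) and cites the Hausmann--Jahren left/right-inverse argument that you write out explicitly; it then identifies the marking via $\pr_{Y_1}|_{W_F}$ exactly as you do. The one step you leave implicit is the short connectedness argument that $\{r'\}\times Y_1\subset F((0,\tau)\times Y_0)$, which the paper instead builds into its choice of $s$ and $v$.
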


\begin{proof}
Choose $s>0$ and $v>0$ so that
\[
  \Sigma\subset(-\infty,s)\times Y_1,
  \qquad
  F^{-1}(\{s\}\times Y_1)\subset(0,v)\times Y_0,
  \qquad
  F(\{v\}\times Y_0)\subset(s,\infty)\times Y_1.
\]
Such choices follow from compactness and end-order preservation.  The three
compact regions bounded successively by
\[
  \{-r\}\times Y_1,
  \quad \Sigma,
  \quad \{s\}\times Y_1,
  \quad F(\{v\}\times Y_0)
\]
are exactly the regions used in the proof of
\cite[Proposition~3.3, implication \textup{(a)}$\Rightarrow$\textup{(b)}]{HausmannJahren2018}; that argument shows that the first region $W_F$ is
invertible.  By \cref{thm:HJ-invertible}, it is therefore an
$h$-cobordism.

Let
\[
  p:=\pr_{Y_1}|_{W_F}\colon W_F\longrightarrow Y_1.
\]
Since $p\circ j_-=\id_{Y_1}$ and $j_-$ is a homotopy equivalence, $p$ is a
homotopy inverse of $j_-$.  Consequently the associated natural homotopy
equivalence is
\[
  p\circ j_+\colon Y_0\longrightarrow Y_1.
\]
For $y\in Y_0$ this map has value
\[
  (p\circ j_+)(y)=\pr_{Y_1}(F(0,y))=b(0,y)=h(y),
\]
as required.
\end{proof}

\section{Three-dimensional topology and simple homotopy}
\label{sec:realization}

\subsection{Prime decomposition and geometrization}
\label{subsec:prime-geometrization}

We record the two three-dimensional decomposition results that determine
the scope of the theorems of Kwasik--Schultz and Turaev.

\begin{definition}
A closed connected three-manifold $P$ is \emph{prime} if every
connected-sum decomposition
\[
  P\cong P_1\#P_2
\]
has one summand homeomorphic to $S^3$.  It is \emph{irreducible} if
every smoothly embedded two-sphere in $P$ bounds a three-ball.  A
closed connected oriented irreducible three-manifold is
\emph{Haken} if it contains a closed embedded two-sided incompressible
surface other than $S^2$; here incompressibility means that the
inclusion induces an injection on fundamental groups.
\end{definition}

\begin{theorem}[Kneser--Milnor prime decomposition]
\label{thm:prime-decomposition}
For $q\ge0$, write
\[
  H_q:=\mathop{\#}\limits^{q}(S^1\times S^2),
  \qquad
  H_0:=S^3.
\]
Every closed connected oriented three-manifold $Y$ admits a
decomposition
\begin{equation}\label{eq:prime-decomposition}
  Y
  \cong
  P_1\#\cdots\#P_r\#H_q,
\end{equation}
where each $P_i$ is irreducible and is not homeomorphic to $S^3$.
The integers $r$ and $q$, and the homeomorphism types of the $P_i$,
are unique up to reordering.
\end{theorem}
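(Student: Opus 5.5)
The plan follows the classical Kneser--Milnor argument, in two parts: existence, by induction on a complexity that strictly decreases when a closed manifold is split along an essential sphere, and uniqueness, by an innermost-disk argument comparing two systems of spheres.

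\textbf{Existence.} Since $Y$ is connected and orientable, splitting along an embedded two-sphere either separates $Y$, giving $Y\cong Y'\#Y''$, or does not, giving $Y\cong Y'\#(S^1\times S^2)$. To guarantee termination I will use Kneser's finiteness lemma. It says there is a constant $c(Y)$, obtained from a triangulation by normal-surface theory, such that any family of more than $c(Y)$ disjoint, pairwise non-parallel spheres, none bounding a ball, cannot exist. This lemma is the main technical obstacle. I would prove it by isotoping a sphere system to be normal with respect to a fixed triangulation. The number of non-product pieces in each tetrahedron is then bounded in terms of the number of tetrahedra, and the complementary product regions between parallel spheres would be parallelity regions. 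Alternatively, one could use Grushko's theorem on $\pi_1$ together with the Poincaré conjecture to bound the number of nontrivial splittings; this is available via Perelman, but normal surfaces avoid that dependence.

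With the lemma in hand, I take a maximal family of disjoint essential spheres. Cutting along them and capping off with balls gives pieces that are irreducible, by maximality. The non-separating spheres in the family account for the $S^1\times S^2$ summands. Discarding the $S^3$ pieces then produces a decomposition of the form \eqref{eq:prime-decomposition}.

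\textbf{Uniqueness.} Consider two decompositions, realized by sphere systems $\mathcal S$ and $\mathcal S'$ in $Y$. First put them in general position. Next, remove the circles of $\mathcal S\cap\mathcal S'$ by innermost-disk surgery: an innermost disk $D\subset S'$ cuts a sphere $S\in\mathcal S$ into two disks, and in the irreducible punctured piece one of the two resulting spheres bounds a punctured ball. This lets us swap disks and reduce the number of intersections, while keeping the pieces the same up to adding or removing $S^3$ summands. Once $\mathcal S\cap\mathcal S'=\emptyset$, each irreducible piece of one decomposition contains only inessential spheres of the other. It therefore matches a piece of the other decomposition up to homeomorphism, by Alexander's theorem that every sphere in $S^3$ bounds a ball.

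The number $q$ is then determined separately: $H_1$ together with the rank of the free part of $\pi_1$ coming from the $S^1\times S^2$ factors determines it. The orientable setting is needed here, since it rules out the twisted bundle $S^1\tilde\times S^2$. This is the classical statement of Milnor (1962), and in the paper I would cite it rather than reprove it.
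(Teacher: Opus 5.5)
This is essentially the paper's route. The paper gives no argument of its own and simply cites Kneser for existence and Milnor for uniqueness, with a remark that the modern phrasing also invokes the Poincar\'e theorem. Your proposal likewise ends by citing Milnor, after a sketch of the standard normal-surface and innermost-disk argument. If you expand that sketch, two points need correcting. First, Kneser's normalization is done by disk surgeries that replace spheres of the system, keeping one of the two surgered spheres so that the size and non-triviality of the system are preserved; it is not done by an isotopy. Second, the number of $S^1\times S^2$ summands is the first Betti number of the dual graph of the sphere system, not the number of non-separating spheres: three non-separating spheres in a theta configuration in $(S^1\times S^2)\#(S^1\times S^2)$ give $q=2$.
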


\begin{proof}[Source]
The existence theorem is due to Kneser and the uniqueness theorem to
Milnor \cite{Kneser1929,Milnor1962}.  The displayed modern form also
uses the Poincar\'e theorem, which is contained in Perelman's proof of
geometrization.
\end{proof}

\begin{theorem}[Geometrization in the form used here]
\label{thm:geometrization-used}
Let $P$ be a closed connected oriented irreducible three-manifold.
If $\pi_1(P)$ is finite, then $P$ is spherical.  If $\pi_1(P)$ is
infinite, then there is a finite family of pairwise disjoint embedded
incompressible tori such that every component obtained by cutting
along these tori is Seifert fibered or has interior admitting a
complete finite-volume hyperbolic metric.
\end{theorem}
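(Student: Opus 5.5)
This statement is the geometrization theorem, so the paper will not reprove it; the plan is to record how the stated form follows from Perelman's work together with classical three-manifold theory, as a citation in the style of the \emph{Source} environments above. I would first separate the two cases according to whether $\pi_1(P)$ is finite or infinite.

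\emph{Finite fundamental group.} Perelman's elliptization theorem (Ricci flow with surgery, plus finite extinction time for manifolds with finite $\pi_1$) shows that a closed orientable three-manifold with finite fundamental group admits a metric of constant positive curvature. Hence $P\cong S^3/\Gamma$ with $\Gamma\subset\SO(4)$ acting freely, so $P$ is spherical. The case $\pi_1(P)=1$ is the Poincar\'e conjecture, which has already been invoked in \cref{thm:prime-decomposition}.

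\emph{Infinite fundamental group.} Irreducibility is the hypothesis needed for the Jaco--Shalen--Johannson torus decomposition. This decomposition gives a minimal finite family of disjoint incompressible tori, unique up to isotopy, such that each complementary piece is either Seifert fibered or atoroidal. Perelman's geometrization theorem, in the form completed by Morgan--Tian and Bessi\`eres--Besson--Boileau--Maillot--Porti, then states that each atoroidal piece is either Seifert fibered or has interior admitting a complete finite-volume hyperbolic metric. For pieces with nonempty torus boundary this is where hyperbolization is needed; in the Haken case it is already Thurston's theorem. If the family of tori is empty, $P$ itself is either Seifert fibered or closed hyperbolic.

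\emph{Main subtlety.} The only real point is matching hypotheses. Closed irreducible manifolds with infinite $\pi_1$ that are atoroidal but not Haken are exactly where Thurston's hyperbolization does not apply, and Perelman's theorem is essential there. Orientability is already in the hypotheses, and incompressibility of the JSJ tori follows from their construction. I would therefore present the proof as a ``Source'' paragraph citing Perelman, Morgan--Tian, and Jaco--Shalen/Johannson, rather than as an argument.
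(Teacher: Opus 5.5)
Your proposal is correct and matches the paper, which likewise gives no argument of its own: it cites Perelman's proof of geometrization and points to Scott's survey for the geometric and JSJ formulation. One minor wording point: JSJ pieces with nonempty torus boundary are automatically Haken, so Thurston's hyperbolization already handles them, and Perelman is needed only in the closed atoroidal non-Haken case, as your last paragraph correctly says.
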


\begin{proof}[Source]
This is the closed orientable form of Thurston's geometrization
conjecture, proved by Perelman
\cite{Perelman2002,Perelman2003Surgery,Perelman2003Extinction};
see also \cite[Sections~4--6]{Scott1983} for the geometric and JSJ
formulation used here.
\end{proof}

\begin{definition}[The broad geometric class]
\label{def:broad-geometric}
A closed connected oriented three-manifold is called
\emph{geometric in the broad Thurston sense} if it is a finite
connected sum of hyperbolic, Seifert fibered, and Haken
three-manifolds.
\end{definition}

This is the broad class called geometric by Kwasik--Schultz and by
Turaev.  It should not be confused with the narrower condition that
the whole manifold admit one of Thurston's eight homogeneous
geometries.

\begin{corollary}\label{cor:all-3manifolds-geometric}
Every closed connected oriented three-manifold is geometric in the
broad Thurston sense.
\end{corollary}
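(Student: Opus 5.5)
The plan is to reduce to the prime decomposition of \cref{thm:prime-decomposition} and then treat each prime summand using \cref{thm:geometrization-used}. Write a closed connected oriented three-manifold $Y$ as in \eqref{eq:prime-decomposition}, namely
\[
  Y\cong P_1\#\cdots\#P_r\#H_q .
\]
By \cref{def:broad-geometric}, the class of manifolds geometric in the broad Thurston sense is closed under finite connected sums. It therefore suffices to show that each summand $P_i$ is hyperbolic, Seifert fibered, or Haken, and that $H_q$ lies in the class. The piece $H_q$ is harmless: $S^1\times S^2$ is Seifert fibered, so $H_q$ is a connected sum of Seifert fibered manifolds. When $q=r=0$ we have $Y\cong S^3$, which is Seifert fibered via the Hopf fibration.

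Next fix an irreducible summand $P=P_i$. If $\pi_1(P)$ is finite, \cref{thm:geometrization-used} makes $P$ spherical. Every closed spherical three-manifold $S^3/\Gamma$ is Seifert fibered, since the finite subgroups of $\SO(4)$ acting freely preserve a Hopf-type fibration; this is the classical fact recorded in \cite{Scott1983}. If $\pi_1(P)$ is infinite, take the family of incompressible tori provided by \cref{thm:geometrization-used}, and split into two cases.

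In the first case the family can be taken empty. Then $P$ itself is either Seifert fibered or has interior carrying a complete finite-volume hyperbolic metric. Since $P$ is closed, that metric is a closed hyperbolic structure, so $P$ is hyperbolic.

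In the second case the family is nonempty. Then $P$ contains a closed, embedded, incompressible torus. The torus is two-sided because $P$ is orientable and the torus is orientable. Since $P$ is irreducible, $P$ is Haken in the sense of the definition above.

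The main obstacle is bookkeeping rather than mathematics. One point to check is that a JSJ torus produced by geometrization is genuinely incompressible, that is, $\pi_1$-injective, and two-sided, so that it meets the Haken definition exactly as stated. A second point is that the spherical case really falls under the Seifert fibered case of the cited class. I would address both by citing \cite[Sections~4--6]{Scott1983}, rather than reproving them. A final small check is that the terms ``hyperbolic, Seifert, Haken'' as used by Kwasik--Schultz and Turaev agree with \cref{def:broad-geometric}, which the paper has already asserted in the remark following that definition.
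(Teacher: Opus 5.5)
Your proposal is correct and follows essentially the same route as the paper: prime decomposition, $S^1\times S^2$ and spherical summands are Seifert fibered (the latter via Scott), and geometrization handles the remaining irreducible summands. The only difference is cosmetic. You split on whether the torus family is empty and deduce Haken when it is not, whereas the paper splits on whether $P_i$ is Haken and deduces that the family is empty when it is not, which is the same dichotomy read contrapositively; your explicit treatment of $Y\cong S^3$ via the Hopf fibration is a small point the paper leaves implicit.
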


\begin{proof}
Apply \cref{thm:prime-decomposition}.  The factor
$S^1\times S^2$ is Seifert fibered.  Let $P_i$ be one of the
irreducible factors.  If $P_i$ is Haken, it already belongs to the
class in \cref{def:broad-geometric}.  If it is not Haken, then it
contains no incompressible torus.  Thus the torus family in
\cref{thm:geometrization-used} is empty.  The manifold $P_i$ is
therefore hyperbolic or Seifert fibered when its fundamental group is
infinite, and is spherical when its fundamental group is finite.
Every closed oriented spherical three-manifold is Seifert fibered;
see \cite[Theorem~4.10 and p.~452]{Scott1983}.  Hence every summand in
\eqref{eq:prime-decomposition} is hyperbolic, Seifert fibered, or
Haken.
\end{proof}

\subsection{Whitehead torsion}
\label{sec:whitehead-torsion}

Let $\pi$ be a group.  Its Whitehead group is
\begin{equation}\label{eq:whitehead-group}
  \Wh(\pi)
  :=
  K_1(\Z[\pi])\big/\langle[\pm g]\mid g\in\pi\rangle.
\end{equation}
Here $K_1(R)$ may be defined as the stable general linear group over
$R$ modulo elementary matrices.  For a homotopy equivalence
\[
  f\colon X\longrightarrow Z
\]
of finite connected CW complexes, Whitehead defined a torsion
\[
  \tau(f)\in\Wh(\pi_1Z).
\]
The map $f$ is called \emph{simple} when $\tau(f)=0$.  We use only
homotopy invariance and the standard formulas
\begin{align}
  \tau(v\circ u)&=\tau(v)+v_*\tau(u),
  \label{eq:torsion-composition}\\
  \tau(u^{-1})&=-u^{-1}_*\tau(u).
  \label{eq:torsion-inverse}
\end{align}
These are \cite[(22.4)--(22.5), pp.~72--73]{Cohen1973}.

An $h$-cobordism is called an \emph{$s$-cobordism} when a boundary
inclusion, equivalently both boundary inclusions, has zero Whitehead
torsion.

\subsection{The Kwasik--Schultz vanishing theorem}
\label{subsec:KS-vanishing}

\begin{theorem}[Kwasik--Schultz, after geometrization]
\label{thm:KS-vanishing}
Every smooth $h$-cobordism between closed connected oriented
three-manifolds is an $s$-cobordism.
\end{theorem}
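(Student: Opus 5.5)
The plan is to reduce the statement to the published theorem of Kwasik--Schultz, which asserts vanishing of Whitehead torsion for $h$-cobordisms between closed three-manifolds that are geometric in the broad Thurston sense \cite{KwasikSchultz1992}. Geometrization then removes their hypothesis. So the argument is essentially a citation plus a verification that the hypotheses match, and I would organize it in three steps.

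First, let $W$ be a smooth $h$-cobordism from $Y$ to $Y'$, with both ends closed, connected and oriented. Since $W$ is compact, it admits a smooth triangulation, and the boundary inclusions $j_Y$ and $j_{Y'}$ become inclusions of finite subcomplexes. Their torsions $\tau(j_Y)\in\Wh(\pi_1W)$ are therefore defined. By the remark at the end of \cref{sec:whitehead-torsion}, it suffices to show that one of these torsions vanishes. For later use, I would also record the relation between the two torsions coming from \eqref{eq:torsion-composition} and \eqref{eq:torsion-inverse}. Write $\tau(W,Y)$ for the torsion of the pair, which equals $\tau(j_Y)$. Milnor duality gives $\tau(W,Y')=(-1)^{3}\overline{\tau(W,Y)}$, where the bar is the standard involution. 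Hence vanishing at one end implies vanishing at the other, consistent with the word ``equivalently'' in the definition.

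Second, I would apply \cref{cor:all-3manifolds-geometric}. Both $Y$ and $Y'$ are geometric in the broad sense of \cref{def:broad-geometric}, which is exactly the class to which the main theorem of Kwasik--Schultz applies. Their theorem states that an $h$-cobordism between such manifolds has vanishing torsion; in other words, $\Wh$ of the relevant fundamental group may be nonzero, but the torsion of the cobordism is zero. It gives directly that $W$ is an $s$-cobordism.

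The main obstacle is matching their hypotheses precisely. Kwasik--Schultz may work in the topological category, may phrase the result for a given end, or may assume geometricity of only one end. In the smooth setting this causes no difficulty, since a smooth $h$-cobordism is in particular topological and the smooth and topological Whitehead torsions agree because both are computed from a triangulation. Since $Y\simeq W\simeq Y'$, geometricity of one end is in any case implied for both by \cref{cor:all-3manifolds-geometric}. Their work also relies on their analysis of $\Wh(\pi_1)$ for free products, which is combinatorial in the prime decomposition of \cref{thm:prime-decomposition}, together with the torsion-free and Seifert cases. I would check only that orientability and connectedness are the sole additional assumptions they use, and then conclude.
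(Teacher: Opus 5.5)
Your proposal is correct and follows the paper's route: you cite the main theorem of Kwasik--Schultz for manifolds that are geometric in the broad Thurston sense, and you remove that hypothesis using the geometrization corollary, which shows that every closed oriented three-manifold is a finite connected sum of hyperbolic, Seifert fibered, and Haken pieces. Your extra remarks on Milnor duality and on smooth versus topological torsion are consistent with this, but the argument does not need them.
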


\begin{proof}[Source and scope]
The main theorem of Kwasik--Schultz states this for oriented
three-manifolds that are geometric in the broad Thurston sense
\cite[p.~736]{KwasikSchultz1992}.  In the remark immediately following
the theorem, they characterize that class as the finite connected
sums of hyperbolic, Seifert fibered, and Haken manifolds
\cite[p.~736]{KwasikSchultz1992}.  The general statement above now
follows from \cref{cor:all-3manifolds-geometric}.
\end{proof}

\begin{corollary}[Simplicity of the natural marking]
\label{cor:natural-marking-simple}
Let $(W,j_-,j_+)$ be an $h$-cobordism between closed connected
oriented three-manifolds, and let $q$ be a homotopy inverse of $j_-$.
Then the natural homotopy equivalence
\[
  q\circ j_+
\]
is simple.
\end{corollary}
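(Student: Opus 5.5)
The plan is to reduce the statement to \cref{thm:KS-vanishing} and the torsion calculus \eqref{eq:torsion-composition}--\eqref{eq:torsion-inverse}. Write $Y=j_-$'s source and $Y'=j_+$'s source, so that $q\colon W\to Y$. Since $W$ is a compact smooth manifold with boundary, it admits a finite CW structure (for instance from a smooth triangulation), with the boundary components as subcomplexes. All maps involved are homotopy equivalences between finite connected complexes, so Whitehead torsion is defined for each of them. By \cref{thm:KS-vanishing}, $W$ is an $s$-cobordism, so
\[
  \tau(j_-)=0\in\Wh(\pi_1W),\qquad \tau(j_+)=0\in\Wh(\pi_1W).
\]
Here we use the stated equivalence that one inclusion is simple if and only if both are. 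If one prefers to avoid that equivalence, we instead invoke the theorem for the cobordism read in either direction.

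Next we compute $\tau(q)$. Since $q\circ j_-\simeq\id_Y$ and torsion is a homotopy invariant, $q$ is homotopic to a homotopy inverse $j_-^{-1}$. Formula \eqref{eq:torsion-inverse} then gives
\[
  \tau(q)=-q_*\tau(j_-)=0.
\]
Finally, \eqref{eq:torsion-composition} yields
\[
  \tau(q\circ j_+)=\tau(q)+q_*\tau(j_+)=0+q_*0=0.
\]
So $q\circ j_+$ is simple. Its homotopy class does not depend on the choice of $q$ by \cref{def:invertible-cobordism}, so the conclusion is independent of that choice as well.

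The only delicate point, and the step I expect to need the most care, is making sure the torsion conventions are applied consistently. Torsion must be computed with respect to the canonical simple homotopy types of the smooth manifolds, namely Whitehead triangulations, which are unique up to simple homotopy equivalence. With that convention, the statement of \cref{thm:KS-vanishing} about the boundary inclusions is literally the vanishing of $\tau(j_\pm)$ used above. Once this is fixed, the rest is the formal algebra just described.
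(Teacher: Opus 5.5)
Your proposal is correct and is essentially the paper's proof. Kwasik--Schultz vanishing gives $\tau(j_-)=\tau(j_+)=0$, the inverse formula gives $\tau(q)=0$, and the composition formula gives $\tau(q\circ j_+)=\tau(q)+q_*\tau(j_+)=0$; your remarks on finite CW structures and Whitehead triangulations are harmless bookkeeping that the paper leaves implicit.
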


\begin{proof}
By \cref{thm:KS-vanishing},
\[
  \tau(j_-)=\tau(j_+)=0.
\]
The inverse formula gives $\tau(q)=0$, and the composition formula
gives
\[
  \tau(q\circ j_+)
  =
  \tau(q)+q_*\tau(j_+)
  =
  0.
\]
\end{proof}

\subsection{Turaev's realization theorem after geometrization}
\label{subsec:Turaev-realization}

Turaev uses the word \emph{geometric} in the broad Thurston sense.
Thus, in Turaev's terminology, a closed connected oriented
three-manifold is geometric if it is hyperbolic, Seifert fibered,
Haken, or a connected sum of manifolds of these types.  This does
not mean that the manifold itself admits one of Thurston's eight
homogeneous geometries.

\begin{theorem}[Turaev]
\label{thm:Turaev-geometric-realization}
Let $Y$ and $Y'$ be geometric three-manifolds in the broad Thurston
sense, and let
\[
  f\colon Y\longrightarrow Y'
\]
be a map of degree $+1$.  Then $f$ is homotopic to an
orientation-preserving PL homeomorphism if and only if it is a simple
homotopy equivalence.
\end{theorem}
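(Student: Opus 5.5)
This theorem is an external input, and I would establish it the same way the paper handles \cref{thm:HJ-invertible} and \cref{thm:KS-vanishing}: by citing the source and checking that the hypotheses match, not by reproving it. There are two directions, and I would treat them separately.

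\emph{Only if.} Suppose $f$ is homotopic to an orientation-preserving PL homeomorphism $g$. Whitehead torsion is a homotopy invariant, so $\tau(f)=\tau(g)$. For a PL homeomorphism, triangulate $Y$ and $Y'$ so that $g$ is simplicial, or invoke Chapman's topological invariance of torsion. Either way $g$ is a simple homotopy equivalence, hence $\tau(g)=0$ and $f$ is simple. The degree condition plays no role in this direction, except that it is compatible with $g$ preserving orientation.

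\emph{If.} This is the substantive direction, and it is exactly \cite[Theorem~I]{Turaev1988Homeomorphisms}; compare \cite[Theorem~1.6]{Turaev1988Classification}. The work is to check that the hypotheses agree with Turaev's conventions.
\begin{enumerate}[label=\textup{(\alph*)},leftmargin=2.4em]
  \item Turaev's ``geometric'' class is the broad class of \cref{def:broad-geometric}, namely finite connected sums of hyperbolic, Seifert fibered and Haken pieces. This is the point the paper already stressed before the statement.
  \item Turaev's hypothesis is that $f$ is a simple homotopy equivalence respecting the fundamental classes. Since $\deg f=+1$, the resulting homeomorphism can be taken orientation-preserving.
  \item By Moise's theorem, PL and topological structures on three-manifolds agree, so no category issues arise.
\end{enumerate}

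\emph{Main obstacle.} The delicate step is the connected-sum case in \textup{(a)}. Homotopy equivalences of connected sums need not split along the prime decomposition: self-homotopy equivalences can slide summands, and lens-space summands are rigid only up to simple homotopy. Turaev's argument handles this by realizing a simple homotopy equivalence summand by summand, using the Kneser--Milnor decomposition of \cref{thm:prime-decomposition}. I would therefore check carefully that his statement covers arbitrary finite connected sums, including $S^1\times S^2$ summands, which are Seifert fibered. Once that is confirmed, \cref{cor:all-3manifolds-geometric} makes the theorem applicable to every closed oriented three-manifold, which is how it will be used for the cross-sectional map $h$.
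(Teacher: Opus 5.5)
Your proposal is correct and takes the paper's approach: the paper also proves this by citing Turaev's Theorem~I (and Theorem~1.6 of the companion paper) for the map-level equivalence, then notes that degree $+1$ forces any homeomorphism homotopic to $f$ to be orientation preserving. Your explicit argument for the easy direction, via homotopy invariance of Whitehead torsion and simplicity of PL homeomorphisms, is a harmless supplement that the paper leaves inside the citation.
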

\begin{proof}[Source]
Turaev proves that a map between geometric three-manifolds is homotopic
to a PL homeomorphism if and only if it is a simple homotopy
equivalence; see \cite[Theorem~I]{Turaev1988Homeomorphisms}.  The same
map-level statement appears as
\cite[Theorem~1.6]{Turaev1988Classification}.  Since the map in the
present formulation has degree $+1$, any homeomorphism in its homotopy
class is orientation preserving.
\end{proof}

\begin{corollary}[Turaev realization for all closed oriented
three-manifolds]
\label{cor:Turaev-all-3manifolds}
Let
\[
  f\colon Y\longrightarrow Y'
\]
be an orientation-preserving simple homotopy equivalence between
closed connected oriented three-manifolds.  Then $f$ is homotopic to
an orientation-preserving diffeomorphism.
\end{corollary}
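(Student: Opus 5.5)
The plan is to obtain the statement directly from \cref{thm:Turaev-geometric-realization} and \cref{cor:all-3manifolds-geometric}, and then pass from PL homeomorphisms to diffeomorphisms using the standard equivalence of categories in dimension three.

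First, by \cref{cor:all-3manifolds-geometric}, both $Y$ and $Y'$ are geometric in the broad Thurston sense, so the hypotheses of \cref{thm:Turaev-geometric-realization} concern only the map $f$. A simple homotopy equivalence is in particular a homotopy equivalence. Since $f$ is orientation preserving, in the sense that $f^*\mu'=\mu$ on the cohomological orientation classes as in \cref{lem:degree-h}, it has degree $+1$. Turaev's theorem then gives a homotopy from $f$ to an orientation-preserving PL homeomorphism $g\colon Y\to Y'$, where $Y$ and $Y'$ carry the PL structures underlying their smooth structures via Whitehead triangulations.

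Second, we upgrade $g$ to a diffeomorphism. In dimension three every topological manifold has a unique PL structure and a unique smooth structure up to isotopy (Moise's Hauptvermutung and the theorems of Munkres, Hirsch and Whitehead on smoothing PL three-manifolds; equivalently, $\mathrm{PL}/\mathrm{O}$ is $6$-connected). Concretely, $g$ is isotopic, hence homotopic, through homeomorphisms to a diffeomorphism $f_1\colon Y\to Y'$. One route is to apply smoothing theory directly to $g$. Another is to note that $g^{*}$ of the smooth structure on $Y'$ is a smooth structure on $Y$ compatible with the same PL structure, hence concordant and then isotopic to the original; composing $g$ with the resulting isotopy gives a diffeomorphism. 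Orientation preservation survives homotopy, because degree is a homotopy invariant. Therefore $f\simeq g\simeq f_1$, with $f_1$ an orientation-preserving diffeomorphism.

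The only delicate point is this PL-to-smooth passage, and the care needed is in citing it at the map level rather than only for manifolds. I would cite the classical result that, for three-manifolds, the inclusions $\Diff(Y)\to\mathrm{PL}(Y)\to\mathrm{Homeo}(Y)$ induce bijections on path components, or at least surjections up to isotopy. This is exactly what is needed. Everything else is bookkeeping.
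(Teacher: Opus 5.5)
Your proposal is correct and follows the paper's proof: \cref{cor:all-3manifolds-geometric} makes both manifolds geometric in the broad sense, and \cref{thm:Turaev-geometric-realization} then homotopes $f$ to an orientation-preserving PL homeomorphism, which the Moise--Munkres uniqueness of PL and smooth structures in dimension three upgrades, up to isotopy, to an orientation-preserving diffeomorphism. Your added justification of the map-level PL-to-smooth step, via the connectivity of $\mathrm{PL}/\mathrm{O}$, spells out what the paper simply cites.
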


\begin{proof}
By \cref{cor:all-3manifolds-geometric}, both $Y$ and $Y'$ are
geometric in the broad Thurston sense.  Hence
\cref{thm:Turaev-geometric-realization} shows that $f$ is homotopic to
an orientation-preserving PL homeomorphism
\[
  h\colon Y\longrightarrow Y'.
\]
By Moise's uniqueness theorem for three-dimensional PL and smooth
structures, together with smoothing of PL homeomorphisms, $h$, and
hence $f$, is homotopic to an orientation-preserving diffeomorphism
\cite{Moise1952,Munkres1960}.
\end{proof}

\begin{theorem}[Realization of the natural marking]
\label{thm:KST-realization}
Let $W$ be an invertible cobordism between closed connected oriented
three-manifolds.  Every orientation-preserving natural homotopy
equivalence associated to $W$ is homotopic to an
orientation-preserving diffeomorphism.
\end{theorem}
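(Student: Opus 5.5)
The plan is to chain together the results already assembled in this section. Let $(W,j_-,j_+)$ be an invertible cobordism between closed connected oriented three-manifolds. By \cref{thm:HJ-invertible}\textup{(ii)}, $W$ is an $h$-cobordism, so both boundary inclusions are homotopy equivalences. In particular, a homotopy inverse $q$ of the incoming inclusion $j_-$ exists. Hence natural homotopy equivalences $q\circ j_+$ are defined in the sense of \cref{def:invertible-cobordism}, and their homotopy class is independent of $q$.

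The first step is to show that any such natural homotopy equivalence is simple. Here I will apply \cref{cor:natural-marking-simple} directly. That corollary rests on \cref{thm:KS-vanishing}: after geometrization, every smooth $h$-cobordism between closed connected oriented three-manifolds is an $s$-cobordism. So $\tau(j_-)=\tau(j_+)=0$, and the composition and inverse formulas \eqref{eq:torsion-composition}--\eqref{eq:torsion-inverse} give $\tau(q\circ j_+)=0$. One point to record is that torsion is a homotopy invariant. Therefore every representative of the natural homotopy class is simple, not just the particular composite $q\circ j_+$.

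The second step is to feed this into \cref{cor:Turaev-all-3manifolds}. Suppose the natural homotopy equivalence is orientation-preserving, meaning it has degree $+1$ with respect to the given orientations of the two boundary components. Then it is an orientation-preserving simple homotopy equivalence between closed connected oriented three-manifolds. By Turaev's theorem, extended to all such manifolds by \cref{cor:all-3manifolds-geometric}, together with Moise/Munkres smoothing, it is homotopic to an orientation-preserving diffeomorphism. This is exactly the claim.

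I expect the main obstacle to be bookkeeping rather than mathematics, namely making sure the hypotheses of the cited results match our setting. Two points need checking. First, \cref{thm:KS-vanishing} is stated for smooth $h$-cobordisms, and an invertible cobordism in \cref{def:invertible-cobordism} is smooth, so this applies. Second, the orientation convention for ``orientation-preserving'' must be compatible with the degree-$+1$ hypothesis of \cref{thm:Turaev-geometric-realization}. Since the statement assumes the map is orientation-preserving, I will simply interpret this as degree $+1$, which is what \cref{lem:degree-h} supplies in the application, and no further argument should be needed.
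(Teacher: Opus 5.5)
Your proposal is correct and follows the paper's proof essentially verbatim: \cref{thm:HJ-invertible}(ii) makes $W$ an $h$-cobordism, \cref{cor:natural-marking-simple} (via Kwasik--Schultz) makes the natural homotopy equivalence simple, and \cref{cor:Turaev-all-3manifolds} then realizes the degree $+1$ map by an orientation-preserving diffeomorphism. Your extra remarks on the homotopy invariance of torsion and on matching the smoothness and degree hypotheses are harmless bookkeeping that the paper leaves implicit.
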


\begin{proof}
By \cref{thm:HJ-invertible}, $W$ is an $h$-cobordism.  Its natural
homotopy equivalence is simple by
\cref{cor:natural-marking-simple}, and hence is realized by a
diffeomorphism by \cref{cor:Turaev-all-3manifolds}.
\end{proof}

\begin{proposition}[Product normal form up to proper homotopy]
\label{prop:product-normal-form}
Let $Y_0$ and $Y_1$ be closed connected oriented three-manifolds, and
let
\[
  F\colon\R\times Y_0\longrightarrow\R\times Y_1
\]
be an orientation-preserving end-order-preserving diffeomorphism.
Then there is an orientation-preserving diffeomorphism
\[
  f\colon Y_0\longrightarrow Y_1
\]
such that $F$ is properly homotopic to $\id_\R\times f$ through end-order-preserving proper maps.
\end{proposition}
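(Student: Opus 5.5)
We assemble the proposition from the results already established, in three steps: first the explicit proper homotopy, then the identification of $h$ with a natural marking, and finally the realization of that marking by a diffeomorphism.

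\textbf{Step 1: reduction to a product map.} Write $F(t,y)=(a(t,y),b(t,y))$ and set $h(y)=b(0,y)$ as in \eqref{eq:definition-h}. By \cref{prop:proper-product-reduction}, the family $H_s$ of \eqref{eq:proper-homotopy-formula} is a proper homotopy from $F$ to $\id_\R\times h$. It is end-order-preserving uniformly in $s$, and $h$ is a homotopy equivalence. Since $F$ preserves orientation, \cref{lem:degree-h} gives $\deg h=+1$.

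\textbf{Step 2: $h$ is a natural marking.} Form the compact cobordism $W_F$ of \eqref{eq:cobordism-W} with the markings \eqref{eq:boundary-markings}. By \cref{prop:natural-marking}, $W_F$ is invertible, and its natural homotopy equivalence is represented by $h$.

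One bookkeeping point needs care here. $W_F$ is a cobordism from $Y_1$ to $Y_0$. The equivalence produced in the proof of \cref{prop:natural-marking} is $p\circ j_+\colon Y_0\to Y_1$, where $p$ is a homotopy inverse of the incoming inclusion $j_-$. This is exactly the shape required by \cref{cor:natural-marking-simple} and \cref{thm:KST-realization}, so no reversal of the cobordism is needed. Since $h$ is orientation preserving, \cref{thm:KST-realization} applies to it. Concretely, $W_F$ is an $h$-cobordism by \cref{thm:HJ-invertible}, so $h$ is simple by \cref{cor:natural-marking-simple}. Then \cref{cor:Turaev-all-3manifolds} yields an orientation-preserving diffeomorphism $f\colon Y_0\to Y_1$ together with a homotopy $g_s\colon Y_0\to Y_1$ from $h$ to $f$.

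\textbf{Step 3: upgrade to a proper homotopy.} The family $\id_\R\times g_s$ is a proper homotopy, because $Y_0$ is compact: the preimage of $[-R,R]\times Y_1$ under the combined map is $[0,1]\times[-R,R]\times Y_0$. It is end-order-preserving, with $T=R$ working for every $s$, because the $\R$-coordinate is untouched. Concatenating the homotopy $H_s$ with $\id_\R\times g_s$ gives a proper homotopy from $F$ to $\id_\R\times f$. The concatenation of two proper homotopies is proper, since the preimage of a compact set is a union of two compact pieces. Uniform end-order preservation also survives: given $R$, take the larger of the two cutoffs $T$.

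\textbf{Main obstacle.} Steps 1 and 3 are routine. The substantive input is Step 2, namely that the marking $h$ really is the natural homotopy equivalence of an invertible, hence $h$-, cobordism between closed oriented three-manifolds. Only then do the Kwasik--Schultz and Turaev results apply. The place to double-check is that the orientation and direction conventions of \cref{prop:natural-marking} match the hypotheses of \cref{thm:KST-realization}; the degree computation of \cref{lem:degree-h} settles the orientation part.
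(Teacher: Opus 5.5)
Your proposal is correct and follows the paper's proof essentially step for step. It uses the proper homotopy of \cref{prop:proper-product-reduction}, gets $\deg h=+1$ from \cref{lem:degree-h}, identifies $h$ as the natural marking of $W_F$ via \cref{prop:natural-marking}, realizes it by a diffeomorphism with \cref{thm:KST-realization}, and concatenates with the product homotopy $(t,y)\mapsto(t,g_s(y))$; your explicit checks of properness and uniform end-order preservation for that last homotopy and for the concatenation fill in details the paper leaves implicit.
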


\begin{proof}
By \cref{prop:natural-marking}, the cross-sectional map
\[
  h\colon Y_0\longrightarrow Y_1
\]
is a natural homotopy equivalence of the invertible cobordism $W_F$.
By \cref{lem:degree-h}, it has degree $+1$.  Hence
\cref{thm:KST-realization} gives an orientation-preserving
diffeomorphism
\[
  f\colon Y_0\longrightarrow Y_1
\]
homotopic to $h$.

Let
\[
  k_s\colon Y_0\longrightarrow Y_1,
  \qquad
  0\le s\le1,
\]
be a homotopy from $h$ to $f$.  Then
\[
  K_s(t,y)=(t,k_s(y))
\]
is an end-order-preserving proper homotopy.  Concatenating it with the
proper homotopy of \cref{prop:proper-product-reduction} proves the
claim.
\end{proof}

\section{Almost-complex structures and oriented plane fields}
\label{sec:almost-complex}

We now explain why the proper homotopy in
\cref{prop:product-normal-form} is sufficient to recover the full
homotopy class of the contact plane field.

\subsection{Almost-complex homotopy classes}

\begin{definition}[Almost-complex structures and the set $\cJ(V)$]
\label{def:J-of-V}
Let $V$ be an oriented smooth four-manifold.  A \emph{positive
almost-complex structure} on $V$ is a smooth bundle automorphism
\[
  J\colon TV\longrightarrow TV
\]
satisfying $J^2=-\id_{TV}$ and such that the complex orientation
determined by $J$ agrees with the given orientation of $V$.

If $g$ is a Riemannian metric on $V$, an almost-complex structure $J$
is called \emph{$g$-orthogonal}, or simply \emph{orthogonal} when the
metric is understood, if
\[
  g(Ju,Jv)=g(u,v)
\]
for all tangent vectors $u$ and $v$.  Equivalently, each fiber map
\[
  J_x\colon (T_xV,g_x)\longrightarrow(T_xV,g_x)
\]
is an isometry.

We write
\begin{equation}\label{eq:J-of-V-definition}
  \cJ(V)
  :=
  \frac{\{\text{positive almost-complex structures on }TV\}}
       {\text{homotopy through positive almost-complex structures}}.
\end{equation}
Thus an element of $\cJ(V)$ is a homotopy class $[J]$; no
integrability condition is imposed.
\end{definition}

An orientation-preserving diffeomorphism
\[
  F\colon V_0\longrightarrow V_1
\]
acts on representatives by geometric pullback,
\begin{equation}\label{eq:geometric-pullback}
  F^*J=(dF)^{-1}\circ J\circ dF,
\end{equation}
and hence induces
\[
  F^*\colon\cJ(V_1)\longrightarrow\cJ(V_0).
\]

For a closed oriented three-manifold $Y$, let $\cP(Y)$ denote the set
of homotopy classes of oriented two-plane fields in $TY$.  The
canonical bijection between $\cJ(\R\times Y)$ and $\cP(Y)$ will be
proved in \cref{subsec:product-correspondence}.

\subsection{The product correspondence}\label{subsec:product-correspondence}

\begin{proposition}[Almost-complex/plane-field correspondence]\label{prop:product-correspondence}
For every closed oriented three-manifold $Y$, there is a canonical bijection
\begin{equation}\label{eq:rho-bijection}
  \rho_Y\colon\cJ(\R\times Y)\xrightarrow{\cong}\cP(Y).
\end{equation}
For a representative $J$, the associated plane field on the slice $\{0\}\times Y$ is
\begin{equation}\label{eq:complex-tangency-plane}
  \rho_Y([J])=[TY\cap J(TY)].
\end{equation}
The bijection is natural for product diffeomorphisms: if $f\colon Y_0\to Y_1$ is orientation preserving, then
\begin{equation}\label{eq:rho-naturality}
  \rho_{Y_0}\bigl((\id_\R\times f)^*[J]\bigr)
  =f^*\rho_{Y_1}([J]).
\end{equation}
\end{proposition}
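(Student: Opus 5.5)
The plan is to reduce both sides to homotopy classes of sections of fibre bundles over $Y$, and then compare the fibres by an explicit fibrewise deformation retraction.

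\textbf{Step 1: reduce to $Y$.} Since $\R\times Y$ deformation retracts onto $\{0\}\times Y$, the bundle $T(\R\times Y)$ is isomorphic to $\pi^*(\underline{\R}\oplus TY)$, where $\pi$ is the projection. By the covering homotopy property, homotopy classes of sections of any bundle associated to $T(\R\times Y)$ correspond bijectively to homotopy classes of sections of its restriction to the slice. Hence
\[
  \cJ(\R\times Y)\cong\cJ(\underline{\R}\oplus TY),
\]
the homotopy classes of positive complex structures on the oriented rank-four bundle $E=\underline{\R}\oplus TY\to Y$.

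\textbf{Step 2: the fibre comparison.} Fix a metric on $TY$ and take the product metric on $E$. In each fibre consider:
\begin{itemize}
  \item the space $\mathcal{A}$ of positive complex structures on $\R^4$;
  \item its subspace $\mathcal{A}_g$ of orthogonal ones, which is a copy of $\SO(4)/\U(2)\cong S^2$;
  \item the Grassmannian $\widetilde G_2(\R^3)\cong S^2$ of oriented planes in $\R^3$.
\end{itemize}
I would argue in two steps.
\begin{enumerate}
  \item The inclusion $\mathcal{A}_g\subset\mathcal{A}$ is a deformation retract, canonically and equivariantly under $\SO(4)$. Any $J$ is compatible with the symplectic-type form built from $g$ after polar decomposition, so the standard retraction $J\mapsto$ (orthogonal part of the polar decomposition of $J$) works. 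It must be checked that this part is again a complex structure and that the construction is continuous.
  \item The map $\mathcal{A}_g\to\widetilde G_2(\R^3)$, $J\mapsto \R^3\cap J\R^3$, is a homeomorphism. For orthogonal $J$, the vector $J\partial_t$ is a unit vector orthogonal to $\partial_t$, hence lies in $\R^3$. Then $\R^3\cap J\R^3=(J\partial_t)^\perp\cap\R^3$ is exactly two-dimensional, oriented by $J$. Conversely, an oriented plane $P$ with positive normal $n$ determines $J$ uniquely by $J\partial_t=n$ together with rotation by $+\pi/2$ on $P$. One must fix the sign convention ($J\partial_t=n$ or $-n$) so that positivity of $J$ matches the orientation convention on $P$.
\end{enumerate}
These maps are $\SO(3)$-equivariant for the structure group of $TY$, so they globalise to a fibre-homotopy equivalence between the $\mathcal{A}$-bundle over $Y$ and the oriented-plane bundle. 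Taking homotopy classes of sections gives the bijection $\rho_Y$.

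\textbf{Step 3: the formula and well-definedness.} The formula \eqref{eq:complex-tangency-plane} needs to hold for arbitrary, non-orthogonal $J$. For this I would show that $TY\cap J(TY)$ is always two-dimensional, since $J$ preserves no three-dimensional subspace. Along the retraction path $J_u$ this plane varies continuously, giving a homotopy of plane fields from the plane of $J$ to the plane of its orthogonal retraction. So the formula agrees with the section-level bijection, and independence of the metric follows because the space of metrics is convex. Equivalently, one can use the nondegeneracy argument directly: the map $J\mapsto \R^3\cap J\R^3$ is itself a homotopy inverse on fibres.

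\textbf{Step 4: naturality.} For $F=\id_\R\times f$ we have $dF=\id\oplus df$. Therefore
\[
  TY_0\cap F^*J(TY_0)=df^{-1}\bigl(TY_1\cap J(TY_1)\bigr),
\]
which is $f^*$ of the plane field, and \eqref{eq:rho-naturality} follows directly at the level of representatives.

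\textbf{Main obstacle.} I expect the hardest part to be the fibrewise statement in Step 2, that is, the deformation retraction of all positive $J$ onto the orthogonal ones in a way compatible with the identification with planes. The orientation and sign bookkeeping is where errors would appear. If the polar decomposition retraction proves awkward, I would instead use the contractibility of the fibres of $\mathcal{A}\to\mathcal{A}_g$, via the $\mathrm{GL}^+(4)/\mathrm{GL}(2,\mathbb{C})\simeq\SO(4)/\U(2)$ homotopy equivalence.
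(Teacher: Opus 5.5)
Your proposal is correct and follows essentially the paper's route: reduce to $g$-orthogonal structures by a fibrewise homotopy equivalence, identify an orthogonal positive $J$ with the oriented plane $(J\partial_t)^\perp\subset TY$ (whose positive normal is $J\partial_t$), and get naturality from $dF=\id\oplus df$, where the complex-linearity of $dF$ on the planes supplies the orientation match. Two of your steps make explicit what the paper only asserts: the polar-decomposition retraction, and Step~3's check that $TY\cap J(TY)$ is always a two-dimensional $J$-invariant plane, so the formula is meaningful for non-orthogonal representatives.
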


\begin{proof}
Choose a Riemannian metric on $Y$ and equip $\R\times Y$ with the product metric.  The inclusion of positive orthogonal almost-complex structures into all positive almost-complex structures is a fiberwise homotopy equivalence, so it suffices to work with orthogonal structures.

For an orthogonal positive almost-complex structure $J$, the vector $J\partial_t$ is a unit vector tangent to $Y$, since it is orthogonal to $\partial_t$.  Define
\[
  \xi_J=(J\partial_t)^\perp\subset TY.
\]
For $v\in TY$,
\[
  Jv\in TY
  \quad\Longleftrightarrow\quad
  \langle Jv,\partial_t\rangle=0
  \quad\Longleftrightarrow\quad
  \langle v,J\partial_t\rangle=0.
\]
Thus
\[
  \xi_J=TY\cap J(TY).
\]
The subbundle is $J$-invariant and hence oriented by $J$.  A homotopy of almost-complex structures gives a homotopy of the resulting oriented plane fields.

Conversely, let $\xi\subset TY$ be an oriented plane field.  Let $n$ be its positive unit normal, characterized by the condition that $(n,v_1,v_2)$ is positively oriented in $TY$ whenever $(v_1,v_2)$ is a positive basis of $\xi$.  Choose the positive orthogonal complex structure on the oriented rank-two bundle $\xi$, and define
\begin{equation}\label{eq:J-from-plane-field}
  J\partial_t=n,
  \qquad
  Jn=-\partial_t,
  \qquad
  J(\xi)=\xi.
\end{equation}
The spaces of auxiliary metrics and positive complex structures on an oriented plane bundle are contractible.  The two constructions are therefore inverse on homotopy classes and give a canonical bijection.

It remains to verify naturality.  Let
\[
  f\colon Y_0\longrightarrow Y_1
\]
be an orientation-preserving diffeomorphism, and put
\[
  F=\id_{\R}\times f\colon \R\times Y_0\longrightarrow \R\times Y_1.
\]
Let $J$ be a positive almost-complex structure on $\R\times Y_1$.
By definition, the pullback almost-complex structure $F^*J$ is
\[
  (F^*J)_x
  =
  (dF_x)^{-1}\circ J_{F(x)}\circ dF_x.
\]
Since $F$ preserves the $\R$-coordinate, it maps the slice
$\{0\}\times Y_0$ diffeomorphically onto $\{0\}\times Y_1$, and
\[
  dF(TY_0)=TY_1.
\]
Moreover,
\[
  dF\circ(F^*J)=J\circ dF.
\]
Consequently,
\begin{align*}
  dF\bigl(TY_0\cap(F^*J)(TY_0)\bigr)
  &=
  dF(TY_0)\cap dF\bigl((F^*J)(TY_0)\bigr)\\
  &=
  TY_1\cap J\bigl(dF(TY_0)\bigr)\\
  &=
  TY_1\cap J(TY_1).
\end{align*}
Thus, as plane subbundles of $TY_0$,
\[
  TY_0\cap(F^*J)(TY_0)
  =
  f^*\bigl(TY_1\cap J(TY_1)\bigr).
\]
This equality also respects orientations.  Indeed, the plane
$TY_0\cap(F^*J)(TY_0)$ is oriented by $F^*J$, while
$TY_1\cap J(TY_1)$ is oriented by $J$, and the identity
\[
  dF\circ(F^*J)=J\circ dF
\]
shows that the restriction of $dF$ to these rank-two bundles is
complex linear.  Hence it preserves their complex orientations.

Using \eqref{eq:complex-tangency-plane}, we therefore obtain
\[
  \rho_{Y_0}\bigl(F^*[J]\bigr)
  =
  f^*\rho_{Y_1}([J]),
\]
which is precisely \eqref{eq:rho-naturality}.

\end{proof}

\subsection{The adapted compatible structure}

Let $(Y,\alpha)$ be a positive contact three-manifold.  Recall that
the Reeb vector field $R_\alpha$ is uniquely characterized by
\[
  \alpha(R_\alpha)=1,
  \qquad
  \iota_{R_\alpha}d\alpha=0.
\]
Choose a complex structure $j$ on $\xi=\ker\alpha$ such that
\[
  d\alpha(v,jv)>0
  \qquad (0\ne v\in\xi).
\]
Define an $\R$-invariant almost-complex structure $J_\alpha$ on $\R\times Y$ by
\begin{equation}\label{eq:adapted-J}
  J_\alpha\partial_t=R_\alpha,
  \qquad
  J_\alpha R_\alpha=-\partial_t,
  \qquad
  J_\alpha|_\xi=j.
\end{equation}

\begin{lemma}[Adapted structures]\label{lem:adapted-J}
The almost-complex structure $J_\alpha$ is compatible with $\omega_\alpha=d(e^t\alpha)$, and
\begin{equation}\label{eq:adapted-plane}
  \rho_Y([J_\alpha])=[\xi].
\end{equation}
\end{lemma}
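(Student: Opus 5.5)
The proof splits into two parts: compatibility, checked by a direct computation on the splitting $T(\R\times Y)=\langle\partial_t,R_\alpha\rangle\oplus\xi$, and identification of $\rho_Y([J_\alpha])$ using the orthogonal description from \cref{prop:product-correspondence}.

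\emph{Compatibility.} By \eqref{eq:symplectic-form}, $\omega_\alpha=e^t(dt\wedge\alpha+d\alpha)$. The two summands of the splitting are $J_\alpha$-invariant by \eqref{eq:adapted-J}, and they are $\omega_\alpha$-orthogonal.

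\begin{itemize}
  \item Both $dt$ and $\alpha$ vanish on $\xi$.
  \item $\iota_{\partial_t}d\alpha=0$ and $\iota_{R_\alpha}d\alpha=0$.
\end{itemize}

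On $\langle\partial_t,R_\alpha\rangle$ we get $\omega_\alpha(\partial_t,J_\alpha\partial_t)=e^t(dt\wedge\alpha)(\partial_t,R_\alpha)=e^t>0$, and $J_\alpha$ is a rotation there, so this block is tamed and $J_\alpha$-invariant. On $\xi$ we have $\omega_\alpha=e^t d\alpha$, and $j$ is $d\alpha$-compatible. Here I would choose $j$ compatible, meaning $d\alpha(jv,jw)=d\alpha(v,w)$; this is automatic on a rank-two bundle for any $j$ with $d\alpha(v,jv)>0$, since $d\alpha|_\xi$ is an area form and $\det j=1$. Therefore $\omega_\alpha(J\cdot,J\cdot)=\omega_\alpha$ and $\omega_\alpha(v,J_\alpha v)>0$ for $v\neq0$.

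\emph{Identification of the plane field.} The map $\rho_Y$ was defined through orthogonal structures. So I would choose the metric $g=dt^2+\alpha^2+d\alpha(\cdot,j\cdot)|_\xi$, with $R_\alpha\perp\xi$. This is a product metric on $\R\times Y$, and $J_\alpha$ is $g$-orthogonal.

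Then $J_\alpha\partial_t=R_\alpha$, whose orthogonal complement in $TY$ is $\xi$. Equivalently, $TY\cap J_\alpha(TY)=\xi$ directly: $J_\alpha(TY)=\langle -\partial_t\rangle\oplus\xi$, and intersecting with $TY$ gives $\xi$. So \eqref{eq:complex-tangency-plane} yields the underlying plane $\xi$.

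\emph{Orientation.} The plane $\xi$ carries the orientation given by $J_\alpha|_\xi=j$, namely $(v,jv)$. This is the $d\alpha$-orientation by the choice of $j$, which is the contact orientation fixed in \cref{sec:conventions}. I would also check that the normal convention in \eqref{eq:J-from-plane-field} matches: $n=R_\alpha$ is positive, since $\alpha\wedge d\alpha(R_\alpha,v,jv)>0$, so the inverse construction recovers $J_\alpha$ itself.

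The main point requiring care is precisely this orientation and normal convention. Everything else is a routine check.
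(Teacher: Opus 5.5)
Your proposal is correct and follows essentially the same route as the paper: compatibility via the $J_\alpha$-invariant, $\omega_\alpha$-orthogonal splitting $\langle\partial_t,R_\alpha\rangle\oplus\xi$, then the direct computation $TY\cap J_\alpha(TY)=\xi$, whose $j$-orientation is the $d\alpha$-orientation. Your extra steps are harmless refinements that the paper leaves implicit: the explicit product metric making $J_\alpha$ orthogonal, and the check that $R_\alpha$ is the positive normal matching the inverse construction.
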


\begin{proof}
Write a tangent vector as
\[
  v=a\partial_t+bR_\alpha+w,
  \qquad w\in\xi.
\]
Then
\[
  J_\alpha v=aR_\alpha-b\partial_t+jw.
\]
The splitting
\[
  T(\R\times Y)
  =\langle\partial_t,R_\alpha\rangle\oplus\xi
\]
is $\omega_\alpha$-orthogonal and $J_\alpha$-invariant.  On
$\langle\partial_t,R_\alpha\rangle$, the map $J_\alpha$ preserves
$dt\wedge\alpha$.  On the oriented real two-plane bundle $\xi$, the
condition $d\alpha(v,jv)>0$ implies
\[
  d\alpha(jv,jw)=d\alpha(v,w)
  \qquad (v,w\in\xi).
\]
Consequently, $J_\alpha$ preserves $\omega_\alpha$.  Moreover, using
\eqref{eq:symplectic-form}, $\alpha(R_\alpha)=1$, and
$\iota_{R_\alpha}d\alpha=0$, we obtain
\[
  \omega_\alpha(v,J_\alpha v)
  =e^t\bigl(a^2+b^2+d\alpha(w,jw)\bigr)>0
\]
for $v\ne0$.  Hence $J_\alpha$ is $\omega_\alpha$-compatible.  Finally,
\[
  TY=\R R_\alpha\oplus\xi,
  \qquad
  J_\alpha(TY)=\R\partial_t\oplus\xi,
\]
so their intersection is exactly $\xi$, with its contact orientation.  This proves \eqref{eq:adapted-plane}.
\end{proof}

\subsection{Gompf's homeomorphism functor}
\label{subsec:Gompf-functor}

We next recall the invariants used by Gompf to classify
almost-complex structures on the open four-manifolds relevant to us.
Throughout this subsection, $Y$ denotes a closed connected oriented
three-manifold, and
\[
  V=\R\times Y
\]
is given the product orientation.

\subsubsection*{The primary invariant $\Gamma$.}

Let $J$ be a positive almost-complex structure on $V$, and let $s$ be
a spin structure on $V$.  Denote by $\mathfrak s_J$ the $\Spinc$
structure induced by $J$ and by $\mathfrak s_s$ the $\Spinc$ structure
obtained from $s$.  Since $\Spinc(V)$ is an affine space over
$H^2(V;\Z)$, there is a unique class
\begin{equation}\label{eq:Gamma-Gompf}
  \Gamma(J,s)\in H^2(V;\Z)
\end{equation}
satisfying
\[
  \mathfrak s_J=\mathfrak s_s+\Gamma(J,s).
\]
The determinant line of $\mathfrak s_s$ is trivial, whereas that of
$\mathfrak s_J$ has first Chern class $c_1(TV,J)$.  Hence
\begin{equation}\label{eq:two-Gamma}
  2\Gamma(J,s)=c_1(TV,J).
\end{equation}
This is Gompf's primary invariant
\cite[Definition~3.1]{Gompf2022}.  In particular, $\Gamma$ retains
possible two-torsion information that is invisible in
$c_1(TV,J)$ alone.

\subsubsection*{Framed representatives and the canonical $\Z$-action.}

We first fix the Poincar\'e-duality convention for the noncompact
manifold $V$.  Let
\[
  V^+=V\cup\{\infty\}
\]
be its one-point compactification, and write
\[
  H_k^{\mathrm{lf}}(V;\Z)
  := H_k(V^+,\{\infty\};\Z)
  \cong \widetilde H_k(V^+;\Z)
\]
for locally finite, or Borel--Moore, homology.  If
\[
  i\colon F\hookrightarrow V
\]
is a proper smooth embedding of an oriented surface without boundary,
then $F$, which may be noncompact, has a Borel--Moore fundamental class
\[
  [F]_{\mathrm{lf}}^{F}\in H_2^{\mathrm{lf}}(F;\Z).
\]
Locally finite homology is covariantly functorial for proper maps, so
we set
\[
  [F]_{\mathrm{lf}}:=i_*[F]_{\mathrm{lf}}^{F}
  \in H_2^{\mathrm{lf}}(V;\Z).
\]
Since $V$ is an oriented four-manifold, noncompact Poincar\'e duality
gives a natural isomorphism
\begin{equation}\label{eq:BM-PD}
  \PD_V\colon
  H_2^{\mathrm{lf}}(V;\Z)
  \xrightarrow{\cong}
  H^2(V;\Z).
\end{equation}

Let $\Omega(V)$ be the set of proper framed-cobordism classes of
properly embedded oriented surfaces without boundary in $V$.  Since a
smoothly embedded surface has a normal bundle
\[
  \nu F=TV|_F/TF,
\]
and properness makes its image closed in $V$, the tubular-neighborhood
theorem applies globally.  A framing trivializes $\nu F$, so the
Pontryagin--Thom collapse construction is available.  It gives a
canonical bijection
\begin{equation}\label{eq:PT-Omega}
  [V,S^2]\xrightarrow{\cong}\Omega(V).
\end{equation}
Indeed, if $u\colon V\to S^2$ is smooth and $p\in S^2$ is a regular
value, then $u^{-1}(p)$ is closed in $V$ and hence properly embedded;
the derivative of $u$ frames its normal bundle.  Conversely, a framed
tubular neighborhood of a proper surface can be collapsed fiberwise
to $D^2/\partial D^2\cong S^2$, with its complement sent to the
basepoint.

Forgetting the framing defines
\begin{equation}\label{eq:eta-framed-surface}
  \eta\colon\Omega(V)\longrightarrow H^2(V;\Z),
  \qquad
  \eta(\phi)=\PD_V([F]_{\mathrm{lf}}),
\end{equation}
where $F$ is any framed representative of $\phi$.  Equivalently, if
$\phi$ corresponds under \eqref{eq:PT-Omega} to a map
$u\colon V\to S^2$, then
\begin{equation}\label{eq:eta-pullback-generator}
  \eta(\phi)=u^*\iota,
\end{equation}
where $\iota\in H^2(S^2;\Z)$ is the positive generator.

Every oriented three-manifold is parallelizable, so $V=\R\times Y$
is parallelizable.  Choose a trivialization
\[
  \tau\colon TV\xrightarrow{\cong}V\times\R^4
\]
inducing the spin structure $s$, and choose a metric for which $\tau$
is orthonormal.  After deforming $J$ through positive
almost-complex structures, we may assume that it is orthogonal.  For
$x\in V$, put
\[
  \widehat J_x
  :=\tau_x\circ J_x\circ\tau_x^{-1}.
\]
The group $\SO(4)$ acts transitively by conjugation on the positive
orthogonal complex structures on $\R^4$, and the stabilizer of the
standard structure is $\U(2)$.  Thus these structures form
\[
  \SO(4)/\U(2)\cong S^2,
\]
and $J$ determines the map
\begin{equation}\label{eq:u-J-map}
  u_J\colon V\longrightarrow\SO(4)/\U(2)\cong S^2,
  \qquad
  u_J(x)=[\widehat J_x].
\end{equation}
Let $\phi_J\in\Omega(V)$ be its Pontryagin--Thom class.  Gompf proves
that
\begin{equation}\label{eq:Gamma-eta}
  \Gamma(J,s)=\eta(\phi_J);
\end{equation}
see \cite[Proposition~3.2]{Gompf2022}.  More generally, for fixed
$J$ and $s$, a class $\phi\in\Omega(V)$ is said to be
\emph{dual to $\Gamma(J,s)$} when
\begin{equation}\label{eq:phi-dual-Gamma}
  \eta(\phi)=\Gamma(J,s).
\end{equation}

We now describe Gompf's canonical $\Z$-action in the product model.
Using the homotopy equivalence $Y=\{0\}\times Y\hookrightarrow V$,
represent a class in $[V,S^2]$ by
\[
  u(t,y)=\bar u(y).
\]
Choose an embedded closed three-ball $B^3\subset Y$, and homotope
$\bar u$ so that it is constant with value $p_0$ on $B^3$.  Choose a
map of pairs
\[
  \eta_H\colon(B^3,\partial B^3)\longrightarrow(S^2,p_0)
\]
whose quotient map $S^3=B^3/\partial B^3\to S^2$ represents the
chosen positive generator of $\pi_3(S^2)\cong\Z$.  We fix the sign by
requiring its Pontryagin--Thom representative to be a $+1$-framed
unknot in $B^3$.  Replace $\bar u|_{B^3}$ by $\eta_H$ and leave
$\bar u$ unchanged outside $B^3$.  Pulling the resulting map back to
$V$ defines the positive generator of the action; replacing
$\eta_H$ by a representative of $n$ times this generator defines the
action of $n\in\Z$.

On the framed-surface side, if $K=\eta_H^{-1}(p)$ for a regular value
$p\ne p_0$, the added component is the proper framed cylinder
\[
  \R\times K\subset\R\times B^3\subset V,
\]
with the product framing induced by the $+1$-framing of the unknot
$K$.  By \cite[Proposition~3.4 and its proof]{Gompf2022}, this
construction defines a canonical $\Z$-action on $\cJ(V)$, independent
of the auxiliary choices, and for every fixed spin structure $s$ the
primary invariant induces a bijection
\begin{equation}\label{eq:Gamma-orbit-space}
  \cJ(V)/\Z\xrightarrow{\cong}H^2(V;\Z).
\end{equation}
Thus $\Gamma$ determines the $\Z$-orbit, while the secondary invariant
below distinguishes its elements.

\begin{remark}\label{rem:Gompf-KM-action}
Under the canonical correspondence
\[
  \rho_Y\colon\cJ(\R\times Y)\xrightarrow{\cong}\cP(Y),
\]
this is the same local $\pi_3(S^2)$-action that occurs in the absolute
grading of monopole Floer homology, but the conventions for the
positive generator are opposite.  Kronheimer--Mrowka define the
action of $n\in\Z$ on $\cP(Y)$ by modifying a plane field inside a
three-ball using a map
\[
  \varrho\colon(B^3,\partial B^3)\longrightarrow(\SO(3),1)
\]
of degree $-2n$; see
\cite[Definition~3.1.2, p.~51]{KronheimerMrowka2007}.  They identify
this $\Z$-set with the monopole Floer grading set in
\cite[Proposition~23.1.8, p.~454]{KronheimerMrowka2007}.

To compare signs, let
\[
  p\colon\SO(3)\longrightarrow S^2,
  \qquad p(A)=A(e_3),
\]
for a fixed positively oriented unit vector $e_3$.  With the standard
orientations, a generator of $\pi_3(\SO(3))$ represented by a map of
ordinary degree $+2$ projects to the positive Hopf generator fixed above.  Thus
the Kronheimer--Mrowka positive generator, defined using degree $-2$,
corresponds to the negative Hopf generator.  Gompf's positive
generator is, by our convention above, the positive Hopf generator,
equivalently the $+1$-framed unknot.  Hence
\[
  1_{\mathrm{Gompf}}=-1_{\mathrm{KM}}.
\]
The two actions have the same orbits but opposite chosen positive
directions.
\end{remark}

\subsubsection*{The secondary invariant $\widetilde\Theta$.}

The invariant $\Gamma$ does not distinguish the elements within a
single $\Z$-orbit.  Fix $J$, a spin structure $s$, and a class
$\phi\in\Omega(V)$ dual to $\Gamma(J,s)$.  Choose a framed surface
$F\subset V$ representing $\phi$.  Gompf proves that there is a
$J$-complex trivialization $\tau_J$ of $TV$ over the complement of a
tubular neighborhood of $F$, compatible with $s$, whose restriction
to every positive meridian of $F$ represents twice the generator of
\[
  \pi_1(\U(2))\cong\Z;
\]
see \cite[Proposition~3.2]{Gompf2022}.

Choose a closed oriented three-manifold $N\subset V$ separating the
two ends, and let $V_N$ be the closure of the component of
$V\setminus N$ containing the positive end.  Gompf chooses a compact
smooth four-manifold $Z^*$ with an orientation-reversing boundary
identification
\[
  \partial Z^*\cong\partial V_N
\]
so that, on
\[
  Z=Z^*\cup_{\partial V_N}V_N,
\]
the structure $J|_{V_N}$ extends to an almost-complex structure
$J_Z$.  The existence of this cap and extension is established in the
first paragraph of the proof of
\cite[Theorem~3.7]{Gompf2022}.

The relative first Chern class $c_1(J_Z,\tau_J)$ is represented by a
properly embedded oriented surface $\widetilde F\subset Z$ that, near
the positive end, consists of two parallel copies of $F$ determined
by $\phi$.  Let
\[
  e(\nu\widetilde F,\phi)
\]
be its relative normal Euler number, computed with respect to the
framing at infinity.  Set
\[
  d(J):=\divis c_1(TV,J),
\]
where divisibility is computed modulo torsion and is defined to be
zero when $c_1(TV,J)$ is torsion.  Gompf defines
\begin{equation}\label{eq:Gompf-Theta-definition}
  \widetilde\Theta(J,s,\phi)
  := e(\nu\widetilde F,\phi)-2\chi(Z)-3\sigma(Z)
  \in\Z/(4d(J)),
\end{equation}
with the convention $\Z/(0)=\Z$; see
\cite[Definition~3.6]{Gompf2022}.

Gompf proves that this class is independent of the cut, cap,
extension, representative, and compatible trivialization.  Together
with $\Gamma$, it is a complete invariant for $\cJ(V)$; see
\cite[Theorem~3.7]{Gompf2022}.  If $1\cdot J$ denotes the result of
applying the positive generator of the canonical $\Z$-action, then
\begin{equation}\label{eq:Gompf-Theta-action}
  \widetilde\Theta(1\cdot J,s,\phi)
  =\widetilde\Theta(J,s,\phi)-4.
\end{equation}

Identify a spin structure $s$ on $V=\R\times Y$ with its restriction
to the slice $\{0\}\times Y$.  Gompf's coset formula is
\begin{equation}\label{eq:Gompf-Theta-coset}
  \widetilde\Theta(J,s,\phi)
  \equiv 2\bigl(1+b_1(Y)\bigr)-\mu(Y,s)
  \pmod{4},
\end{equation}
where $\mu(Y,s)\in\Z/16$ is the Rohlin invariant; see
\cite[Proposition~4.5]{Gompf2022}.  Its reduction modulo $8$ is
preserved by orientation-preserving homeomorphisms
\cite[Proposition~4.4]{Gompf2022}.

We can now state the functoriality theorem in the form used below.

\begin{theorem}[Gompf]\label{thm:Gompf-homeomorphism-functor}
Let $Y_0$ and $Y_1$ be closed connected oriented three-manifolds and
set
\[
  V_i=\R\times Y_i,
  \qquad i=0,1.
\]
Every orientation-preserving, end-order-preserving homeomorphism
\[
  g\colon V_0\longrightarrow V_1
\]
induces a unique $\Z$-equivariant bijection
\[
  g^\sharp\colon\cJ(V_1)\longrightarrow\cJ(V_0)
\]
preserving $\Gamma$ and $\widetilde\Theta$ after the spin, $\Spinc$,
cohomological, and framed data have been transported by $g$.  These
bijections are contravariantly functorial.  If $g$ is a
diffeomorphism, then
\[
  g^\sharp[J]=[g^*J],
\]
where
\[
  (g^*J)_x=(dg_x)^{-1}\circ J_{g(x)}\circ dg_x
\]
is the geometric pullback.
\end{theorem}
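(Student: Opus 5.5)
The statement is Gompf's functoriality theorem, and I would prove it by reducing to the complete-invariant description of $\cJ(V)$ recalled above, following \cite[Theorems~4.1 and~4.7]{Gompf2022}. The plan is to \emph{define} $g^\sharp$ through the invariants. Fix a spin structure $s_1$ on $V_1$ and put $s_0:=g^*s_1$. Topological spin structures agree with smooth ones here, since $V_i\simeq Y_i$ and $H^1$ is a homotopy invariant. For $[J]\in\cJ(V_1)$, I would declare $g^\sharp[J]$ to be the class $[J']\in\cJ(V_0)$ characterized by two conditions:
\[
  \Gamma(J',s_0)=g^*\Gamma(J,s_1),
  \qquad
  \widetilde\Theta(J',s_0,g^*\phi)=\widetilde\Theta(J,s_1,\phi).
\]
Here $g^*\phi$ is the framed class transported via \eqref{eq:PT-Omega}, using $[V_1,S^2]\to[V_0,S^2]$. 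By \eqref{eq:eta-pullback-generator}, $\eta(g^*\phi)=g^*\eta(\phi)$, so duality \eqref{eq:phi-dual-Gamma} is preserved.

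Uniqueness of $J'$ is immediate from completeness of $(\Gamma,\widetilde\Theta)$ in \cite[Theorem~3.7]{Gompf2022}. Existence is a realization problem, which I would handle in two stages.

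\emph{First stage: the $\Z$-orbit.} The bijection \eqref{eq:Gamma-orbit-space} produces an orbit with the prescribed $\Gamma$. The moduli match because, by \eqref{eq:two-Gamma}, $c_1$ and hence $d(J)$ are carried along by the isomorphism $g^*$ on $H^2$.

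\emph{Second stage: the element within the orbit.} By \eqref{eq:Gompf-Theta-action}, the values of $\widetilde\Theta$ on the orbit form exactly one coset of $4\Z$ modulo $4d$. By \eqref{eq:Gompf-Theta-coset}, that coset is $2(1+b_1)-\mu(Y,s)$ mod $4$. This quantity agrees for $(Y_0,s_0)$ and $(Y_1,s_1)$, because $b_1$ is a homotopy invariant and $\mu$ mod $8$ is a homeomorphism invariant \cite[Proposition~4.4]{Gompf2022}. Note that $g$, being a proper end-order-preserving homeomorphism, yields an $h$-cobordism between the $Y_i$ as in \cref{sec:cobordism}, and in particular a homotopy equivalence. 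So the target value is attained.

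Once $g^\sharp$ is defined, the remaining properties follow by formal checks. $\Z$-equivariance follows from \eqref{eq:Gompf-Theta-action}, applied on both sides with the same shift $-4$. Bijectivity follows because $(g^{-1})^\sharp$ inverts $g^\sharp$. Contravariant functoriality $(g_1g_2)^\sharp=g_2^\sharp g_1^\sharp$ follows from uniqueness together with functoriality of the transported data. Independence of the auxiliary choice of $s_1$ and $\phi$ I would take from the independence statements in \cite[Theorem~3.7]{Gompf2022}.

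For a diffeomorphism $g$, I would show that $g^*J$ itself has the transported invariants, by naturality of every ingredient. The $\Spinc$ structure of $g^*J$ is $g^*\mathfrak s_J$. A $J$-complex trivialization off $F$ pulls back to a $g^*J$-complex one off $g^{-1}F$. A cut $N$ pulls back to the cut $g^{-1}N$, which still separates the ends with the positive side preserved because $g$ is end-order-preserving. The cap $Z^*$ is reglued via $g$, so $\chi$, $\sigma$ and the relative Euler number are unchanged. Hence $g^\sharp[J]=[g^*J]$.

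The main obstacle is the well-definedness of the transported secondary invariant when $g$ is merely a homeomorphism. There is then no smooth pullback of framed surfaces, caps or relative Chern classes, so one must argue that the matching condition on $\widetilde\Theta$ does not depend on how framed classes are transported and on which spin structure is chosen. My first attempt would be to observe that, after fixing $\Gamma$, the relevant $\widetilde\Theta$-torsor mod $4d$ is determined by homotopy data plus the mod-$8$ Rohlin invariant, exactly as in Gompf's Section~4. I would then simply cite \cite[Theorems~4.1 and~4.7]{Gompf2022} for the fact that the resulting identification is canonical, rather than reconstructing it here.
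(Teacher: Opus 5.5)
Your proposal follows essentially the same route as the paper's explanation. You define $g^\sharp$ orbit by orbit, matching $\Gamma$ via \eqref{eq:Gamma-orbit-space} and then selecting the element of the orbit that matches $\widetilde\Theta$, using the coset formula, the mod-$8$ Rohlin invariance and the shift by $-4$. Functoriality comes from uniqueness, the diffeomorphism case from naturality of every ingredient, and independence of the auxiliary choices is deferred to Gompf, with only two minor slips.

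First, the paper attributes independence of the construction from the choice of $s_1$ and $\phi$ to Gompf's Proposition~4.2 and Theorem~4.7. Theorem~3.7 only covers independence from the cut, cap, extension, representative and trivialization for fixed data. Second, the detour through the $h$-cobordism of \cref{sec:cobordism} is unnecessary, and for a mere homeomorphism it is not literally available, since that construction assumes smoothness. The equivalence $Y_0\simeq V_0\cong V_1\simeq Y_1$ already gives $b_1(Y_0)=b_1(Y_1)$.
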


\begin{proof}[Source and explanation]
This is the specialization of \cite[Theorem~4.7]{Gompf2022} to the
product manifolds $V_i=\R\times Y_i$.  We recall the structure of
Gompf's construction to clarify the uniqueness assertion used below.

Fix a spin structure $s_1$ on $V_1$.  By
\cite[Theorem~4.1]{Gompf2022}, the homeomorphism $g$ transports spin
and $\Spinc$ structures functorially; write
\[
  s_0=g^*s_1.
\]
For $[J_1]\in\cJ(V_1)$, transport the induced $\Spinc$ structure
$\mathfrak s_{J_1}$ to $V_0$.  The defining identity for $\Gamma$
then requires the image class $[J_0]$ to satisfy
\[
  \Gamma(J_0,s_0)=g^*\Gamma(J_1,s_1).
\]
By \eqref{eq:Gamma-orbit-space}, this condition determines a unique
$\Z$-orbit in $\cJ(V_0)$.

Choose $\phi_1\in\Omega(V_1)$ dual to $\Gamma(J_1,s_1)$.  Under the
Pontryagin--Thom identification, precomposition by $g$ transports it
to a class
\[
  \phi_0=g^*\phi_1\in\Omega(V_0)
\]
dual to $g^*\Gamma(J_1,s_1)$.  It remains to select the correct
element of the target orbit.  By \eqref{eq:Gompf-Theta-coset}, the
possible values of $\widetilde\Theta$ on an orbit form an index-four
coset determined by the Rohlin invariant of the corresponding spin
three-manifold.  The topological invariance modulo $8$ supplied by
\cite[Proposition~4.4]{Gompf2022} implies that the source and target
orbits have the same allowed coset.  Since the positive generator of
the $\Z$-action changes $\widetilde\Theta$ by $-4$ according to
\eqref{eq:Gompf-Theta-action}, there is a unique $\Z$-equivariant
bijection between these orbits preserving $\widetilde\Theta$.
Applying this construction to every orbit defines $g^\sharp$.

Gompf proves that the construction is independent of the auxiliary
spin and framed choices and that it is compatible with identity maps
and composition; see \cite[Proposition~4.2 and Theorem~4.7]{Gompf2022}.
Uniqueness then gives contravariant functoriality.
If $g$ is a diffeomorphism, geometric pullback transports all of the
spin, $\Spinc$, cohomological, and framed data above and preserves the
relative Euler number, Euler characteristic, and signature occurring
in \eqref{eq:Gompf-Theta-definition}.  It therefore has the defining
properties of $g^\sharp$, so uniqueness gives
\[
  g^\sharp[J]=[g^*J].
\]
\end{proof}

\begin{corollary}[Proper-homotopy invariance of pullback on $\cJ$]
\label{prop:J-proper-invariance}
Let $Y_0$ and $Y_1$ be closed connected oriented three-manifolds and
set
\[
  V_i=\R\times Y_i,
  \qquad i=0,1.
\]
Suppose that
\[
  g_0,g_1\colon V_0\longrightarrow V_1
\]
are orientation-preserving, end-order-preserving diffeomorphisms that
are properly homotopic, not necessarily isotopic through
diffeomorphisms.  Then
\[
  g_0^*=g_1^*\colon\cJ(V_1)\longrightarrow\cJ(V_0).
\]
\end{corollary}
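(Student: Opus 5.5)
The plan is to deduce the corollary from the uniqueness clause in \cref{thm:Gompf-homeomorphism-functor}. Both $g_0$ and $g_1$ are orientation-preserving, end-order-preserving diffeomorphisms, so that theorem gives $g_i^\sharp[J]=[g_i^*J]$ for $i=0,1$. It therefore suffices to prove $g_0^\sharp=g_1^\sharp$. By the uniqueness assertion, $g_i^\sharp$ is the unique $\Z$-equivariant bijection preserving $\Gamma$ and $\widetilde\Theta$ once the spin, $\Spinc$, cohomological and framed data have been transported by $g_i$. So I will show that the transported data coincide for $g_0$ and $g_1$. Then $g_0^\sharp$ and $g_1^\sharp$ satisfy the same defining conditions and must agree.

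The first step is the cohomological and spin data. Since $V_i\simeq Y_i$ and $g_0\simeq g_1$, we get $g_0^*=g_1^*$ on $H^2(V_1;\Z)$ and on $H^1(V_1;\Z/2)$. Spin structures on $V_1$ form an $H^1(V_1;\Z/2)$-torsor, and their transport through a diffeomorphism depends only on the homotopy class of the differential. The key observation is that on $V=\R\times Y$ every spin structure is pulled back from the slice, and I would compare $g_0^*s_1$ with $g_1^*s_1$ via $\id_\R\times h$ and \cref{prop:proper-product-reduction}. Doing this comparison concretely is a potential subtlety. I would instead invoke Gompf's \cite[Theorem~4.1]{Gompf2022}, which defines the transport for homeomorphisms and is manifestly invariant under homotopy of the underlying map, since it acts through the action on $H^1$ and $H^2$. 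Hence $g_0^*s_1=g_1^*s_1$. Since $\Spinc(V)$ is a torsor over $H^2$, the $\Spinc$ structures and hence $\Gamma$ transport identically as well. Consequently the target $\Z$-orbit selected by \eqref{eq:Gamma-orbit-space} is the same for both maps.

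Next, the framed data. Under \eqref{eq:PT-Omega}, a class $\phi_1\in\Omega(V_1)$ corresponds to a map $u\colon V_1\to S^2$, and its transport is $u\circ g_i$. A proper homotopy $H$ from $g_0$ to $g_1$ gives a homotopy $u\circ H_s$, so $g_0^*\phi_1=g_1^*\phi_1$ in $[V_0,S^2]\cong\Omega(V_0)$. The one point needing care is that $\widetilde\Theta$ uses the relative Euler number with respect to the framing at infinity on the positive end. For this I use properness together with \cref{lem:proper-homotopy-invariance}: the homotopy is end-order-preserving uniformly in $s$. So the pulled-back framed surfaces are properly framed cobordant through a cobordism lying, near the positive end, over the positive end, and $\widetilde\Theta$ is invariant under such changes by Gompf's independence statement.

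With all transported data equal, $g_0^\sharp$ and $g_1^\sharp$ are two $\Z$-equivariant bijections $\cJ(V_1)\to\cJ(V_0)$ that carry each orbit to the same orbit and preserve $\widetilde\Theta$. By \eqref{eq:Gompf-Theta-action}, $\widetilde\Theta$ is injective on each orbit with the appropriate modulus, so the uniqueness argument recalled in the proof of \cref{thm:Gompf-homeomorphism-functor} forces $g_0^\sharp=g_1^\sharp$, whence $g_0^*=g_1^*$. I expect the main obstacle to be the framed-data step, namely justifying that the framing at infinity, and hence $\widetilde\Theta$, is unchanged under proper rather than compactly supported homotopy. If the direct argument is delicate, I would fall back on the fact that Gompf's \cite[Theorem~4.7]{Gompf2022} is formulated for homeomorphisms up to proper homotopy, and cite it directly for that step.
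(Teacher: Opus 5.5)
Your proposal is correct and follows the paper's proof. Both reduce to $g_0^\sharp=g_1^\sharp$ via the uniqueness clause of \cref{thm:Gompf-homeomorphism-functor}, check that the transported cohomological, spin/$\Spinc$ (via Gompf's Theorem~4.1) and framed data (precomposition under Pontryagin--Thom) coincide for properly homotopic maps, and then identify $g_k^\sharp$ with geometric pullback. Two of your side remarks are unneeded or slightly off: preservation of $\widetilde\Theta$ is already built into each $g_k^\sharp$ once the transported data agree, so the framing-at-infinity worry is unnecessary, and the proper-homotopy invariance of spin transport is the content of Gompf's Theorem~4.1 itself rather than a consequence of the induced maps on $H^1$ and $H^2$.
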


\begin{proof}
Let
\[
  H\colon[0,1]\times V_0\longrightarrow V_1
\]
be a proper homotopy from $g_0$ to $g_1$.  The two maps are homotopic,
so they induce the same pullback on ordinary cohomology:
\[
  g_0^*=g_1^*\colon H^2(V_1;\Z)\longrightarrow H^2(V_0;\Z).
\]
By \cite[Theorem~4.1]{Gompf2022}, properly homotopic
orientation-preserving proper homotopy equivalences also induce the
same pullback on spin and $\Spinc$ structures.  Thus
\[
  g_0^*=g_1^*\colon\Spin(V_1)\longrightarrow\Spin(V_0)
\]
and
\[
  g_0^*=g_1^*\colon\Spinc(V_1)\longrightarrow\Spinc(V_0).
\]
Here $\Spin(V)$ and $\Spinc(V)$ denote the corresponding sets of
structures.  The defining identity
\[
  \mathfrak s_J=\mathfrak s_s+\Gamma(J,s)
\]
then shows that $g_0$ and $g_1$ transport $\Gamma$ identically.

The framed data agree as well.  Under the Pontryagin--Thom
identification $\Omega(V_i)\cong[V_i,S^2]$, pullback is
precomposition.  Hence, for every $\phi\in\Omega(V_1)$,
\[
  g_0^*\phi=g_1^*\phi,
\]
because $g_0$ and $g_1$ are homotopic.

Fix $[J]\in\cJ(V_1)$, a spin structure $s_1$, and a class
$\phi_1\in\Omega(V_1)$ dual to $\Gamma(J,s_1)$.  The two functorial
images
\[
  g_0^\sharp[J],
  \qquad
  g_1^\sharp[J]
\]
have the same value of $\Gamma$ and therefore belong to the same
$\Z$-orbit in $\cJ(V_0)$.  They also have the same transported spin,
$\Spinc$, cohomological, and framed data.  Since each $g_k^\sharp$
preserves $\widetilde\Theta$, the two classes have the same value of
that secondary invariant.  Thus $g_0^\sharp$ and $g_1^\sharp$ are $\Z$-equivariant
bijections with exactly the same transported spin, $\Spinc$,
cohomological, and framed data, and both preserve the corresponding
values of $\Gamma$ and $\widetilde\Theta$.  The uniqueness assertion
in \cref{thm:Gompf-homeomorphism-functor} therefore gives
\[
  g_0^\sharp[J]=g_1^\sharp[J].
\]
This holds for every $[J]$, so $g_0^\sharp=g_1^\sharp$.  Finally,
because $g_0$ and $g_1$ are diffeomorphisms,
\cref{thm:Gompf-homeomorphism-functor} identifies $g_k^\sharp$ with
geometric pullback $g_k^*$.  Hence $g_0^*=g_1^*$ on $\cJ(V_1)$.
\end{proof}

\begin{remark}\label{rem:no-differentiate}
If the proper homotopy were an isotopy through diffeomorphisms $g_t$,
the conclusion would follow directly from the homotopy
$J_t=g_t^*J$.  A general proper homotopy passes through arbitrary
proper maps, whose differentials need not be bundle isomorphisms, and
a homotopy of the underlying maps need not determine a homotopy from
$dg_0$ to $dg_1$.  Thus one cannot differentiate the given homotopy
to obtain a homotopy from $g_0^*J$ to $g_1^*J$.  Gompf's topological
functoriality is precisely what bypasses this tangential obstruction.
\end{remark}

\section{Proof of formal symplectization rigidity}\label{sec:formal-proof}

We now assemble the symplectic, differential-topological, and almost-complex inputs.

\begin{proof}[Proof of \cref{thm:formal-rigidity}]
Choose positive contact forms $\alpha_i$ with $\ker\alpha_i=\xi_i$, and write
\[
  V_i=\R\times Y_i,
  \qquad
  \omega_i=d(e^t\alpha_i).
\]
Let
\[
  \Phi\colon(V_0,\omega_0)\longrightarrow(V_1,\omega_1)
\]
be a symplectomorphism.

By \cref{prop:end-preservation}, $\Phi$ preserves the order of the two ends.  It is orientation preserving because it is symplectic.  Therefore \cref{prop:product-normal-form} gives an orientation-preserving diffeomorphism
\[
  f\colon Y_0\longrightarrow Y_1
\]
and an end-order-preserving proper homotopy from $\Phi$ to
\[
  F:=\id_\R\times f.
\]

Choose adapted compatible almost-complex structures $J_i$ on $(V_i,\omega_i)$ as in \eqref{eq:adapted-J}.  Since $\Phi$ is symplectic, $\Phi^*J_1$ is $\omega_0$-compatible.  The space of $\omega_0$-compatible almost-complex structures is contractible, so
\begin{equation}\label{eq:compatible-equality}
  [J_0]=\Phi^*[J_1]\in\cJ(V_0).
\end{equation}
By this proper homotopy and \cref{prop:J-proper-invariance},
\begin{equation}\label{eq:replace-Phi-by-product}
  \Phi^*[J_1]=F^*[J_1]\in\cJ(V_0).
\end{equation}
Applying the product correspondence \cref{prop:product-correspondence}, its naturality, and \cref{lem:adapted-J}, we obtain
\begin{align*}
  [\xi_0]
  &=\rho_{Y_0}([J_0])\\
  &=\rho_{Y_0}(F^*[J_1])\\
  &=f^*\rho_{Y_1}([J_1])\\
  &=[f^*\xi_1].
\end{align*}
Thus $\xi_0$ and $f^*\xi_1$ are homotopic as oriented plane fields.
\end{proof}

\section{Proof of overtwisted symplectization rigidity}\label{sec:overtwisted-proof}

We isolate the deep contact-topological input and prove the stability step in full.

\begin{theorem}[Eliashberg's overtwisted classification]\label{thm:Eliashberg-overtwisted}
Let $Y$ be a closed oriented three-manifold.  The inclusion of the
space of positive cooriented overtwisted contact structures into the
space of positive oriented two-plane fields induces a bijection on
path components.  Equivalently, two positive cooriented overtwisted
contact structures on $Y$ are isotopic through positive contact
structures if and only if they are homotopic as oriented plane fields.
\end{theorem}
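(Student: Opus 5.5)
This statement is Eliashberg's theorem, which the paper treats as an external input, so the plan is to cite it and check that it matches what is stated. The source is Eliashberg's 1989 classification \cite{Eliashberg1989}. It proves that the inclusion of the space of overtwisted contact structures (with a fixed overtwisted disc) into the space of plane fields is a weak homotopy equivalence. For the $\pi_0$ statement, Huang's treatment of the fixed-disc subtlety \cite{Huang2013} should supply the remaining argument.

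The statement has two parts. For surjectivity on path components, I would use Eliashberg's existence theorem: every homotopy class of oriented plane fields contains an overtwisted contact structure. The construction is Lutz--Martinet, followed by a Lutz twist to create an overtwisted disc. For injectivity, suppose $\xi_0$ and $\xi_1$ are overtwisted and homotopic as oriented plane fields through some family $\eta_s$. Each $\xi_i$ has an overtwisted disc $\Delta_i$.
\begin{enumerate}
\item By isotopy extension, and since embedded discs in a connected $3$-manifold are isotopic up to orientation, move $\Delta_1$ onto $\Delta_0$ by an ambient isotopy $\psi_s$ isotopic to the identity. This replaces $\xi_1$ by an isotopic contact structure, and it remains homotopic to $\xi_0$ through plane fields.
\item The homotopy of plane fields generally fails to be fixed near $\Delta_0$. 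The discrepancy is a loop in the local model's space of plane fields on a ball, that is, an element of $\pi_1$ of the $2$-sphere bundle. I would correct it using the flexibility in the choice of a second disjoint overtwisted disc; this is Huang's argument.
\item Apply Eliashberg's relative $h$-principle, relative to a neighborhood of the fixed disc, to deform $\eta_s$ rel endpoints into a path of contact structures.
\end{enumerate}

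The main obstacle is step 2: the relative $h$-principle requires a fixed overtwisted disc throughout the family, and matching the plane-field homotopy near that disc needs care. Eliashberg's paper leaves this step somewhat implicit, and Huang's paper makes it precise. In the paper itself this theorem would simply be quoted. Gray stability then converts the path of contact structures into an isotopy, as the paper states will be done in the following stability step.
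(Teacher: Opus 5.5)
Quoting Eliashberg's theorem, with Huang's convex-surface proof as an alternative reference, is exactly what the paper does, and that citation is all the statement requires. The sketch you attach, however, contains two concrete errors.

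For surjectivity, a simple Lutz twist does not stay in the prescribed homotopy class. Along a transverse knot $K$ it changes the Euler class by $2\PD[K]$ (up to sign), and in general it changes the homotopy class of the plane field. You need a \emph{full} Lutz twist, which is homotopic to the original structure through plane fields. Alternatively, surjectivity follows directly from Eliashberg's relative theorem: any plane field can be homotoped to agree with a standard overtwisted germ near a disc, because the space of oriented plane fields on a ball is connected.

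For injectivity, the obstruction you locate in your second step is vacuous. The space of oriented plane fields on a ball $U\supset\Delta_0$ is homotopy equivalent to $\operatorname{Map}(B^3,S^2)\simeq S^2$, which is simply connected. So the loop $s\mapsto\eta_s|_U$ is null-homotopic rel endpoints, and the homotopy lifting property of the restriction map deforms $\eta_s$, with its endpoints fixed, into a path that is constant near $\Delta_0$. No second overtwisted disc is needed. The paper cites Huang only as an alternative convex-surface proof of the classification, not as a repair of this step.

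The point that genuinely needs an argument is the one you pass over in your first step. Eliashberg's relative theorem compares contact structures and plane fields that agree with a \emph{fixed} contact germ near the disc, so arranging $\psi_1(\Delta_1)=\Delta_0$ is not enough. You must choose the isotopy so that $\xi_0$ and the transported $\xi_1$ have the same cooriented germ along $\Delta_0$. This uses two facts:
\begin{enumerate}
\item each overtwisted structure contains an overtwisted disc with a standard neighborhood, since the germ along a surface is determined by its characteristic foliation;
\item orientation-preserving embeddings of a ball into a connected three-manifold are ambient isotopic.
\end{enumerate}
With these corrections, the $\pi_0$ statement follows directly from the relative theorem.
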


This is Eliashberg's classification theorem \cite{Eliashberg1989}; see also Huang's convex-surface proof \cite{Huang2013}.

\begin{proof}[Proof of \cref{thm:main-overtwisted}]
Suppose first that
\[
  S(Y_0,\xi_0)\congsymp S(Y_1,\xi_1).
\]
By \cref{thm:formal-rigidity}, there is an orientation-preserving diffeomorphism $f\colon Y_0\to Y_1$ such that
\[
  \xi_0\simeq f^*\xi_1
\]
as oriented plane fields on $Y_0$.  Since $\xi_1$ is overtwisted, so is $f^*\xi_1$.  By \cref{thm:Eliashberg-overtwisted}, the contact structures $\xi_0$ and $f^*\xi_1$ are joined by a path of positive contact structures.  Applying Gray stability \cite{Gray1959} gives a diffeomorphism $\psi\in\Diff_0(Y_0)$ such that
\[
  \psi^*f^*\xi_1=\xi_0.
\]
Therefore $f\circ\psi\colon(Y_0,\xi_0)\to(Y_1,\xi_1)$ is a coorientation-preserving contactomorphism.

Conversely, every coorientation-preserving contactomorphism lifts to a strict exact symplectomorphism by \cref{lem:strict-lift}.  This proves both implications in \eqref{eq:main-iff}.
\end{proof}

\section{Higher dimension}\label{sec:scope}

The higher-dimensional counterpart of
\cref{thm:main-overtwisted} is false.

\begin{proposition}[Higher-dimensional overtwisted counterexamples]
\label{prop:higher-dimensional-overtwisted}
For every odd integer $d\ge 5$, there are closed connected cooriented
overtwisted contact $d$-manifolds
$(M_0,\widehat\xi_0)$ and $(M_1,\widehat\xi_1)$ that are not
contactomorphic---indeed, $M_0$ and $M_1$ are not diffeomorphic---although
their symplectizations are exact symplectomorphic.
\end{proposition}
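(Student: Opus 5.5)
The plan is to take Courte's manifolds and make their contact structures overtwisted, checking that his exact symplectomorphism survives. Courte's construction \cite{Courte2014} gives, for every $n\ge1$, contact structures $\widehat\zeta_0$ on $M_0=L(7,1)\times S^{2n}$ and $\widehat\zeta_1$ on $M_1=L(7,2)\times S^{2n}$, of dimension $d=2n+3$, whose symplectizations are exact symplectomorphic. Every odd $d\ge5$ arises this way. The manifolds are not diffeomorphic; as Courte notes, this already follows from the fact that $L(7,1)$ and $L(7,2)$ are not homotopy equivalent and the $S^{2n}$ factor does not change this. Hence $M_0$ and $M_1$ cannot be contactomorphic for any choice of contact structures. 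It remains to arrange that both contact structures are overtwisted while the symplectizations stay exact symplectomorphic.

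\textbf{Step 1: recall Courte's mechanism.} Courte's structures arise as contact structures on the boundaries of Liouville-type domains, or via an $h$-cobordism/invertible-cobordism argument. Concretely, $\R\times M_0$ and $\R\times M_1$ are related by an exact symplectomorphism that identifies the symplectizations outside compact sets, up to the flow of the Liouville field. The point to extract is that his exact symplectomorphism $S(M_0,\widehat\zeta_0)\cong S(M_1,\widehat\zeta_1)$ is insensitive to local modifications of the contact structure. A modification supported in a Darboux ball in a slice can be transported along the symplectomorphism.

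\textbf{Step 2: overtwist by a connected sum with an overtwisted sphere.} Let $(S^d,\xi_{\mathrm{ot}})$ be an overtwisted contact sphere; these exist by Borman--Eliashberg--Murphy. Set $\widehat\xi_i:=\widehat\zeta_i\#\xi_{\mathrm{ot}}$ on $M_i\#S^d\cong M_i$. By BEM, a contact connected sum with an overtwisted manifold is overtwisted, and its diffeomorphism type is unchanged. So it suffices to show that $S(M_0,\widehat\xi_0)$ and $S(M_1,\widehat\xi_1)$ are exact symplectomorphic.

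\textbf{Step 3: show that connected sum with a fixed contact sphere is compatible with exact symplectomorphism of symplectizations.} I would realize the contact connected sum as the convex end of a Weinstein 1-handle attachment to the trivial cobordism, applied to $(M_i,\widehat\zeta_i)\sqcup(S^d,\xi_{\mathrm{ot}})$. Given an exact symplectomorphism $\Psi$ of symplectizations, I would first isotope it, using the Liouville flow and Hamiltonian isotopy, so that it is strict, of the form of \cref{lem:strict-lift}, near a chosen Darboux ball at a very negative level. This is possible because any exact symplectic embedding of a small Darboux ball can be Hamiltonian-isotoped to a standard one, and the cone over a Darboux ball sits inside the symplectization. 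The handle is then attached along this standard region on both sides, so $\Psi$ extends over the modified pieces.

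\textbf{Main obstacle.} The delicate point is that the symplectization of a connected sum is not a local modification of the original symplectization. The change persists along the whole $\R$-direction, so one must check that $\Psi$ respects the product structure near the ray $\R\times\{\text{ball}\}$. I would try to arrange this first by choosing the ball so that $\Psi$ is $\R$-equivariant, meaning it intertwines the Liouville flows, on the cone over it. Exactness of $\Psi$ gives $\Psi^*\lambda_1=\lambda_0+dH$. I would then cut off $H$ near the ball, using that the ball is contractible, to make $\Psi$ strictly Liouville there, and hence conical. If this fails, the fallback is to redo Courte's construction with the overtwisted sum built in from the start, since his argument is flexible in the choice of contact structure on the invertible cobordism.
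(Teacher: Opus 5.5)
Your target manifolds are the right ones: the paper also ends with Courte's contact structures connect-summed with an overtwisted $(S^d,\xi_{\mathrm{ot}})$. The gap is Step~3, which is where the proof lives, and you flag the reason yourself. The contact connected sum alters the symplectization along the whole cone $\R\times B$. So you need $\Psi$ to be strict on an entire noncompact cone, i.e.\ $H$ constant on $\R\times B$ and $\Psi(\R\times B)=\R\times B'$ conically. Normalizing $\Psi$ near a ball at one very negative level says nothing about the rest of the ray. ``Cutting off $H$'' is also not an operation available to you: $H$ is determined by $\Psi$ up to a constant. Changing it means replacing $\Psi$ by $\Psi\circ\chi$, where $\chi$ is a symplectomorphism of $S(M_0)$ carrying a $\partial_t$-cone equivariantly onto a cone for the Liouville field of $\lambda_0+dH$ (which differs from $\partial_t$ by the Hamiltonian field of $H$). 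Constructing such a $\chi$ is a noncompact extension problem, with no control of $H$ at either end, and the proposal does not address it. Your Step~1 picture is also wrong and hides this. Courte's map cannot be conical near the positive end. Otherwise $\Psi(\{T\}\times M_0)$, $T\gg0$, would be a graph over $M_1$ transverse to $\partial_t$, giving a contactomorphism, hence a diffeomorphism, $M_0\cong M_1$. So there is no structural reason for his map to respect a product structure along any ray.

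The missing idea is to perform the connected sum on the cobordism, before the Mazur trick; this is what the paper does. Your one-line fallback points this way but omits the ingredients. Start from Milnor's $h$-cobordism $W$ from $M_0$ to $M_1$. Courte's Lemma~4.1 puts a flexible Weinstein structure on $W$ with prescribed $\xi_0$ on $\partial_-W$ but only \emph{some} $\xi_1$ on $\partial_+W$, so choosing $\xi_0$ overtwisted would not by itself control $\xi_1$. The paper then takes the Weinstein connected sum of $W$ with the trivial cobordism $([0,1]\times S^d,e^t\alpha_{\mathrm{ot}})$ along arcs from $\partial_-$ to $\partial_+$, as in Courte's 2016 paper. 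The result is still flexible. Smoothly it only exchanges one copy of $D^d\times[0,1]$ for another, so it is an $h$-cobordism from $M_0\#S^d\cong M_0$ to $M_1\#S^d\cong M_1$. An overtwisted disc in $S^d$ away from the arc survives in both boundary components. Courte's Mazur trick for flexible Weinstein $h$-cobordisms then gives the exact symplectomorphism directly, so compatibility of a given $\Psi$ with connected sum is never needed. Finally, your non-diffeomorphism argument is false: $L(7,1)$ and $L(7,2)$ \emph{are} homotopy equivalent, since $1\cdot2\equiv3^2\pmod 7$. They must be, because $M_i\simeq\R\times M_i$ and the symplectizations are diffeomorphic. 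The correct input is Milnor's torsion computation showing $L(7,1)\times S^{2n}\not\cong L(7,2)\times S^{2n}$.
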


\begin{proof}
Write
\[
d=2n+3,\qquad n\ge 1,
\]
and set
\[
M_0=L(7,1)\times S^{2n},
\qquad
M_1=L(7,2)\times S^{2n},
\qquad n\ge1.
\]
These manifolds are $h$-cobordant and are not diffeomorphic.
For the $h$-cobordism statement, Milnor proves more generally that if
$M_1$ and $M_2$ are closed parallelizable $k$-manifolds of the same
homotopy type and $N>k>1$, then
\[
M_1\times S^{N-1}
\quad\text{and}\quad
M_2\times S^{N-1}
\]
are $h$-cobordant
\cite[\S2, p.~579]{Milnor1961}.
Since $L(7,1)$ and $L(7,2)$ are parallelizable homotopy-equivalent
$3$-manifolds, this applies with $N=2n+1$ whenever $n\ge2$.
The remaining case $n=1$ is treated separately by Milnor, who proves
that
\[
L(7,1)\times S^2
\quad\text{and}\quad
L(7,2)\times S^2
\]
are $h$-cobordant
\cite[\S2, p.~580]{Milnor1961}.
Finally, Milnor's torsion computation shows that
\[
L(7,1)\times S^{m}
\not\cong_{\mathrm{diff}}
L(7,2)\times S^{m}
\]
for every even $m$
\cite[Corollary~2, p.~588]{Milnor1961}.
Thus $M_0$ and $M_1$ are $h$-cobordant but not diffeomorphic.
This particular family is also recalled explicitly by Courte in
\cite[Introduction, pp.~1--2]{Courte2014}.

Fix an $h$-cobordism
\[
W\colon M_0\longrightarrow M_1
\]
as above.  Courte observes that $M_0$ admits a contact structure; see
\cite[Introduction, pp.~1--2]{Courte2014}.  Fix one such contact
structure $\xi_0$.  By
\cite[Lemma~4.1, pp.~8--9]{Courte2014}, the $h$-cobordism $W$
admits a flexible Weinstein structure inducing $\xi_0$ on its negative
boundary and some contact structure $\xi_1$ on its positive boundary.
Thus we have a flexible Weinstein $h$-cobordism
\[
W\colon (M_0,\xi_0)\longrightarrow(M_1,\xi_1).
\]

We next introduce overtwistedness without changing the underlying
smooth manifolds.  The standard sphere $S^d$ carries an almost contact
structure, for instance the one underlying its standard contact
structure.  By the overtwisted $h$-principle of
Borman--Eliashberg--Murphy, more precisely
\cite[Corollary~1.3]{BormanEliashbergMurphy2015}, this almost contact
structure is homotopic to an overtwisted contact structure.  Fix such
a structure and denote it by $\xi_{\mathrm{ot}}$.

Choose a contact form $\alpha_{\mathrm{ot}}$ for $\xi_{\mathrm{ot}}$.
The product
\[
P=[0,1]\times S^d
\]
carries the trivial Weinstein-cobordism structure obtained by restricting
the symplectization:
\[
\lambda_P=e^t\alpha_{\mathrm{ot}},
\qquad
\omega_P=d\lambda_P,
\qquad
X_P=\partial_t.
\]
Thus $P$ is a Weinstein cobordism from
$(S^d,\xi_{\mathrm{ot}})$ to itself, with no critical points.

We now form the Weinstein connected sum of $W$ and $P$ along
neighborhoods of properly embedded arcs joining the negative to the
positive boundary.  This is precisely the local connected-sum operation
used by Courte in the proof of
\cite[Corollary~2.2, p.~665]{Courte2016}: there the two Weinstein
cobordisms are glued along neighborhoods of arcs running from
$\partial_-$ to $\partial_+$, and Courte observes that the resulting
Weinstein cobordism is flexible.  Applying the same construction here
gives a flexible Weinstein cobordism
\[
\widehat W\colon
(M_0\mathbin{\#}S^d,\,
 \xi_0\mathbin{\#}\xi_{\mathrm{ot}})
\longrightarrow
(M_1\mathbin{\#}S^d,\,
 \xi_1\mathbin{\#}\xi_{\mathrm{ot}}).
\]

Let us also check explicitly that the underlying smooth cobordism
$\widehat W$ is still an $h$-cobordism.  Choose the arc in
$P=S^d\times[0,1]$ to be a product arc $\{p\}\times[0,1]$.  A regular
neighborhood of this arc is diffeomorphic to
$D^d\times[0,1]$, while
\[
\overline{
P\setminus \nu(\{p\}\times[0,1])
}
\cong
\bigl(S^d\setminus\operatorname{int}D^d\bigr)\times[0,1]
\cong
D^d\times[0,1].
\]
Consequently, on the smooth level the connected-sum operation merely
removes a copy of $D^d\times[0,1]$ from $W$ and glues back another
copy of $D^d\times[0,1]$.  Hence $\widehat W$ is diffeomorphic to $W$
as an unparametrized cobordism.  In particular, $\widehat W$ is again
an $h$-cobordism.  Its boundary components are, as expected,
\[
M_i\mathbin{\#}S^d\cong M_i,
\qquad i=0,1.
\]

It remains to verify that the two boundary contact structures are
overtwisted.  By
\cite[Definition~3.5]{BormanEliashbergMurphy2015}, overtwistedness
means precisely that the contact manifold admits a contact embedding
of a Borman--Eliashberg--Murphy overtwisted disc.  Fix such an
overtwisted disc
\[
D_{\mathrm{ot}}\subset(S^d,\xi_{\mathrm{ot}}).
\]
Choose the Darboux ball in the $S^d$-factor used for the contact
connected sum disjoint from a neighborhood of $D_{\mathrm{ot}}$.
The connected-sum construction changes the contact structure only in
the chosen Darboux balls and in the connecting neck.  Therefore the
contact germ along $D_{\mathrm{ot}}$ is unchanged, and
$D_{\mathrm{ot}}$ survives as an overtwisted disc in each contact
connected sum
\[
(M_i\mathbin{\#}S^d,\,
 \xi_i\mathbin{\#}\xi_{\mathrm{ot}}).
\]
Hence both of these contact structures are overtwisted.

Finally, Courte's symplectic Mazur-trick argument applies to any
flexible Weinstein $h$-cobordism between closed contact manifolds of
dimension at least five.  This consequence is stated explicitly in
\cite[Introduction, p.~657]{Courte2016}; the original construction is
the proof of \cite[Theorem~4.3, pp.~9--12]{Courte2014}.  In that proof,
Courte chooses an inverse $h$-cobordism, forms an infinite alternating
concatenation of the two Weinstein cobordisms, and uses a Weinstein
homotopy together with a Moser argument to identify the resulting exact
symplectic manifold with the two symplectizations.  Applied to
$\widehat W$, it gives an exact symplectomorphism
\[
S\bigl(M_0\mathbin{\#}S^d,\,
       \xi_0\mathbin{\#}\xi_{\mathrm{ot}}\bigr)
\cong_{\mathrm{ex}}
S\bigl(M_1\mathbin{\#}S^d,\,
       \xi_1\mathbin{\#}\xi_{\mathrm{ot}}\bigr).
\]

Using the standard diffeomorphisms
\[
M_i\mathbin{\#}S^d\cong M_i,
\]
define
\[
\widehat\xi_i
:=
\xi_i\mathbin{\#}\xi_{\mathrm{ot}},
\qquad i=0,1.
\]
The contact structures $\widehat\xi_0$ and $\widehat\xi_1$ are
overtwisted by the preceding argument, their symplectizations are exact
symplectomorphic, while their underlying manifolds $M_0$ and $M_1$ are
not diffeomorphic by Milnor's Reidemeister-torsion calculation.
Therefore they cannot be contactomorphic.
\end{proof}
Thus \cref{thm:main-overtwisted} is a genuinely low-dimensional rigidity statement.

\section*{Acknowledgements}

This work was supported by JSPS KAKENHI Grant Number JP26K16990
(Grant-in-Aid for Early-Career Scientists) and by the Start-up Fund of
the Kavli Institute for the Physics and Mathematics of the Universe
(Kavli IPMU), The University of Tokyo.

This paper grew out of discussions between the author and ChatGPT.
The author used ChatGPT for exploratory bibliographic searches, checks
of algebraic and sign computations, and English and \LaTeX{} editing.
ChatGPT was not treated as an authoritative source: the author
independently checked the cited literature and all mathematical
arguments, computations, and references, and takes full responsibility
for the content of the manuscript.

\bibliographystyle{amsplain}
\bibliography{Symplectization_final}

\end{document}